\documentclass[10pt]{article}

\usepackage[letterpaper,margin=0.88in]{geometry}
\usepackage[T1]{fontenc}
\usepackage{lmodern}
\usepackage{amsmath,amssymb,amsthm,mathtools}
\usepackage{booktabs}
\usepackage{graphicx}
\usepackage{xcolor}
\usepackage{enumitem}
\usepackage{hyperref}
\usepackage[nameinlink,noabbrev]{cleveref}

\hypersetup{
  colorlinks=true,
  linkcolor=blue!55!black,
  citecolor=blue!55!black,
  urlcolor=blue!55!black,
  pdftitle={Farthest-cell triplet entropy: high-dimensional shell limits and hyperbolic curvature amplification},
  pdfauthor={Chongkun Deng},
  pdfsubject={High-dimensional probability, comparison-based entropy, and hyperbolic geometry},
  pdfkeywords={conditional entropy, farthest-site cells, high-dimensional geometry, hyperbolic curvature, ordinal comparisons}
}
\graphicspath{{paper_outputs/}}
\setlist{nosep,leftmargin=*}
\newtheorem{theorem}{Theorem}
\newtheorem{proposition}{Proposition}
\newtheorem{lemma}{Lemma}

\theoremstyle{definition}
\newtheorem{definition}{Definition}

\newcommand{\Htri}{H_3}
\newcommand{\Hinf}{H_{\infty}}
\newcommand{\metric}{\mathsf d}
\newcommand{\E}{\mathbb{E}}
\newcommand{\Pp}{\mathbb{P}}

\newcommand{\one}{\mathbf{1}}

\newcommand{\doi}[1]{\href{https://doi.org/#1}{doi:#1}}

\title{\textbf{Farthest-cell triplet entropy: high-dimensional shell limits and hyperbolic curvature amplification}}
\author{%
  Chongkun Deng%
  \thanks{Correspondence: \href{mailto:cd3411@columbia.edu}{cd3411@columbia.edu}. ORCID: \href{https://orcid.org/0009-0005-0808-9278}{0009-0005-0808-9278}.}%
  \thanks{AI-use disclosure: ChatGPT and Gemini assisted with language and code. The author verified the final manuscript and computations.}\\[-0.2em]
  \small Independent researcher (no institutional affiliation)
}
\date{Preprint, September 2026}

\begin{document}
\maketitle

\begin{abstract}
We introduce farthest-cell triplet entropy, the conditional Shannon entropy of the farthest-prototype label given three random prototypes. For independent queries and prototypes, its estimator records only the farthest label, not coordinates or numerical distances. The statistic is bounded by $\log 3$, is invariant under common strictly increasing transformations of the dissimilarities, and has an exact mutual-information interpretation. In high-dimensional isotropic radial models $X_d=R_dU_d$, the Euclidean ordering reduces to scores $\lambda_d\xi_{i,d}-Z_i$, where $\lambda_d=\sqrt d\,\operatorname{sd}(R_d)/\E R_d$ and the $Z_i$ are independent standard Gaussian variables. This gives angular-dominated, intermediate, and radial-dominated entropy limits $\log 3$, $\Hinf(\lambda;F)$, and $0$. In hyperbolic space of curvature $-\kappa_d^2$, the same master curve appears at $\lambda_{d,\mathbb H}=\sqrt d\,\tau_d A(s_d)$, where $\tau_d=\operatorname{sd}(R_d)/\E R_d$, $s_d=\kappa_d\E R_d$, and $A(s)=s\coth s$. With a calibrated radial law and $\tau_d$, and a monotone operating interval, entropy inversion identifies the scale-invariant target $s_d^2$; absolute curvature requires an external length unit. CPU simulations give Euclidean and hyperbolic master-curve RMSEs of $0.0164$ and $0.0209$. Inversion from observed synthetic latent coordinates has a median relative error in $\kappa$ of $6.6\%$, while angular anisotropy increases this error to $68.9\%$. Thus the entropy statistic is comparison-based, whereas curvature recovery remains model-calibrated, is not graph-only, and is not robust to anisotropy.
\end{abstract}

\section{Introduction}

Let a query and three prototypes be sampled independently from a population on a metric space. Assign the query the label of its farthest prototype. Conditional on the prototype triple, the three label probabilities are the population masses of its random farthest-site cells. We define farthest-cell triplet entropy as the Shannon entropy of this conditional label distribution, averaged over the prototype triple. The statistic measures how predictable the farthest label becomes after the prototypes are observed: it is near $\log 3$ when their cells are balanced and near zero when one cell dominates.

Given sampled queries and prototypes, estimation requires only the farthest-of-three label, not the three distance values. This is the precise sense in which the statistic is comparison-based. Curvature calibration requires additional information: the model dimension, relative shell thickness, radial law, and validity of the isotropic shell model. These quantities may be supplied by coordinates or by an ordinal embedding under separate recovery assumptions. Because ordinal embeddings identify scale only up to a convention, their global scale is a coordinate gauge rather than metric information created by the comparisons \cite{kleindessner2014,ariascastro2017,suzuki2019}.

To state the main mechanism precisely, consider for each dimension $d$ the isotropic Euclidean radial model
\begin{equation}
  X_d=R_dU_d\in\mathbb R^d,\qquad
  U_d\sim\operatorname{Unif}(S^{d-1}),\qquad R_d\perp U_d,
  \label{eq:radial-model-intro}
\end{equation}
where $R_d>0$ is the random radius. Let $\rho_d=\E R_d$, $\epsilon_d=\operatorname{sd}(R_d)$, and $\xi_d=(R_d-\rho_d)/\epsilon_d$. A prototype-radius fluctuation has scale $\epsilon_d$, whereas the angular fluctuation has scale $\rho_d/\sqrt d$. Their ratio
\begin{equation}
  \lambda_d=\sqrt d\,\frac{\epsilon_d}{\rho_d}
  \label{eq:lambda-flat}
\end{equation}
governs a one-parameter Gaussian noisy-ranking problem. In hyperbolic space, the law of cosines changes this ratio by
\begin{equation}
  A(s)=s\coth s,\qquad s=\kappa_d\rho_d.
  \label{eq:amplification}
\end{equation}
Curvature therefore changes the effective radial-to-angular signal without creating a new entropy curve. Writing $\tau_d=\epsilon_d/\rho_d$ and $s_d=\kappa_d\rho_d$ gives
\begin{equation}
  \lambda_{d,\mathbb H}=\sqrt d\,\tau_d A(s_d).
  \label{eq:lambda-dimensionless-intro}
\end{equation}
Both $\tau_d$ and $s_d$ are unchanged by a global change of length unit. Consequently, once $\tau_d$ and the radial law are separately calibrated and the master curve is verified to be monotone on the operating interval, entropy inversion targets the normalized curvature $s_d^2=\kappa_d^2\rho_d^2$, not absolute curvature in an unspecified unit.

Our contributions are:
\begin{enumerate}
  \item a conditional-entropy definition, an exact mutual-information identity, a consistent label-based estimator, a universal ceiling, and invariance under common strictly increasing transformations of the dissimilarities;
  \item a Euclidean score reduction establishing a three-regime asymptotic law, indexed by $\lambda_d$, under relative shell thinness;
  \item a hyperbolic score reduction that retains the random query radius and shows that curvature multiplies the Euclidean parameter by $A(\kappa_d\rho_d)$ under an explicit curvature-scale thinness condition;
  \item a dimensionless inverse formulation for $\kappa_d^2\rho_d^2$, together with similarity-gauge invariance and explicit radial-law, isotropy, and monotonicity assumptions;
  \item a reproducible finite-dimensional study with residual analysis, a controlled latent-network calibration experiment, and an anisotropy stress test.
\end{enumerate}

The inferential scope is model-specific. Absolute curvature recovery requires an external length unit; margin-normalized coordinates support only curvature in the corresponding embedding units. The isotropic master curve does not become valid merely because the data are represented ordinally. The anisotropy stress test provides a negative control, and no anisotropy correction is claimed. The synthetic network experiment observes latent coordinates and therefore evaluates latent-space calibration rather than graph-only inference.

\section{Related work}

\paragraph{Entropy and random partitions.}
Shannon entropy measures categorical balance \cite{shannon1948}. Plug-in entropy estimates have a leading finite-sample bias, with a classical correction due to Miller and later systematic analysis by Paninski \cite{miller1955,paninski2003}. Voronoi diagrams and farthest-site cells are standard computational-geometric objects \cite{aurenhammer1991}. Existing ``Voronoi entropy'' commonly summarizes the distribution of polygon types or cell morphology in a spatial tessellation \cite{bormashenko2024}. Our object instead takes the entropy of the \emph{probability masses} of three random farthest-site cells and then averages over the random prototype triple. To our knowledge, its high-dimensional shell transition has not been analyzed previously.

\paragraph{High-dimensional spherical geometry.}
The Gaussian limit of finitely many coordinates of a random point on a high-dimensional sphere is a classical Poincar\'e-type phenomenon; the conditional version used here follows after the random prototype Gram matrix converges to the identity \cite{diaconisfreedman1987}. Thin-shell asymptotics also appear in the study of rotationally invariant random simplices \cite{heiny2022}. Our use is different: the spherical Gaussian fluctuation competes with a radial perturbation in a conditional farthest-cell mass.

\paragraph{Ordinal embedding and scale.}
Ordinal embedding reconstructs configurations from comparisons such as $d_{ij}<d_{k\ell}$. Under dense regular Euclidean comparisons, uniqueness is only up to translation, rotation, reflection, and a common scale \cite{kleindessner2014}; consistency and local-comparison variants require additional sampling conditions \cite{ariascastro2017}. The hyperbolic ordinal-embedding method of Suzuki et al. uses fixed curvature and a loss-margin convention to represent hierarchical data \cite{suzuki2019}. Our entropy statistic can be computed before embedding from farthest-of-three labels. Curvature inversion, however, also requires the dimension, relative shell thickness, and radial law. An embedding can supply those calibration quantities only under separate recovery assumptions. Accordingly, we report normalized curvature and treat any embedding margin as a chosen gauge, not as observed physical scale.

\paragraph{Hyperbolic representations and networks.}
Negative curvature supports parsimonious representations of hierarchies and heterogeneous networks \cite{krioukov2010,papadopoulos2012,nickelkiela2017}. Modern representation methods learn constant or mixed curvatures \cite{gu2019,skopek2020,bachmann2020}. Recent latent-network work directly estimates curvature by likelihood and studies identifiability and rates \cite{lixuzhu2026}. Our statistic is neither a likelihood nor an embedding algorithm. It is a rank-based summary for data already associated with a specified radial metric model, and it cannot resolve the global distance-scale ambiguity or identify the calibration model by itself.

\section{Farthest-cell triplet entropy}

Let $(\mathcal X,\metric,P)$ be a metric probability space. Draw a query $X\sim P$ and a prototype triple $V=(V_1,V_2,V_3)\sim P^3$ independently. For fixed $v=(v_1,v_2,v_3)$, let
\begin{equation}
  L_v(X)\in\arg\max_{1\le i\le3}\metric(X,v_i)
\end{equation}
be the farthest-prototype label, with uniform randomization among exact ties. Define
\begin{equation}
  q_i(v)=\Pp\{L_V(X)=i\mid V=v\},\qquad
  h(q)=-\sum_{i=1}^3 q_i\log q_i.
\end{equation}

\begin{definition}[Farthest-cell triplet entropy]
The farthest-cell triplet entropy is
\begin{equation}
  \Htri(P,\metric)
  =H\{L_V(X)\mid V\}
  =\E_V h\{q(V)\}.
  \label{eq:def-H3}
\end{equation}
\end{definition}

\begin{proposition}[Information identity, ceiling, and comparison invariance]
\label{prop:ordinal}
For every metric probability space,
\begin{equation}
  \log 3-\Htri(P,\metric)
  =I\{L_V(X);V\},
  \label{eq:general-mi}
\end{equation}
and hence
\begin{equation}
  0\le \Htri(P,\metric)\le\log 3.
\end{equation}
Equality in the upper bound holds if and only if $q(V)=(1/3,1/3,1/3)$ almost surely. Moreover, the value is unchanged when every dissimilarity is transformed by a common strictly increasing function; in particular, it is invariant to global metric rescaling.
\end{proposition}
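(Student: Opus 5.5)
The plan is to first establish that the marginal law of $L_V(X)$ is uniform on $\{1,2,3\}$, and then read \eqref{eq:general-mi} off the decomposition $I\{L;V\}=H(L)-H(L\mid V)$.

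Uniformity comes from exchangeability. The pair $(X,V_1,V_2,V_3)$ has law $P\otimes P^3$, which is invariant under any permutation $\pi$ of the prototype indices. The tie rule is also permutation-equivariant: for fixed $v$, permuting the prototypes permutes the label. Hence
\begin{equation*}
\Pp\{L_{V}(X)=i\}=\Pp\{L_{V\circ\pi}(X)=\pi^{-1}(i)\}=\Pp\{L_V(X)=\pi^{-1}(i)\},
\end{equation*}
so the three marginal probabilities coincide and each equals $1/3$. To make this rigorous, I would represent the uniform tie randomization by an auxiliary uniform random ordering $\sigma$ of $\{1,2,3\}$, independent of $(X,V)$, and take the first maximizer in that ordering. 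Then $(X,V,\sigma)$ is jointly exchangeable under relabelling, so the argument applies verbatim. Consequently $H(L)=\log 3$. Since $H(L\mid V)=\E_V h\{q(V)\}=\Htri$ by definition, we get $\log 3-\Htri=H(L)-H(L\mid V)=I\{L;V\}$. All quantities are finite because $L$ takes only three values, so the general measure-theoretic definition of conditional entropy causes no difficulty.

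For the bounds, $\Htri\ge 0$ because $h\ge 0$ pointwise, and $\Htri\le\log 3$ because $I\ge 0$; alternatively, $h(q)\le\log 3$ pointwise. For the equality case, $\Htri=\log 3$ holds if and only if $\E_V[\log 3-h\{q(V)\}]=0$. The integrand is nonnegative and vanishes only at $q=(1/3,1/3,1/3)$, by strict concavity of $h$ or by the equality case of Gibbs' inequality. So equality holds if and only if $q(V)$ is uniform almost surely. Equivalently, $I=0$ if and only if $L$ is independent of $V$, which, given the uniform marginal, is the same condition.

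For the invariance claim, let $\phi$ be strictly increasing on the range of $\metric$ and replace $\metric$ by $\phi\circ\metric$. For every $x$ and $v$, the set $\arg\max_i \phi(\metric(x,v_i))$ equals $\arg\max_i \metric(x,v_i)$, because strict monotonicity preserves both strict order and equality. The tie set is therefore unchanged, so with the same randomization $L_v(x)$ has the same conditional law. Hence each $q_i(v)$, and with it $\Htri$, is unchanged. Here $\phi\circ\metric$ need not be a metric; the definition only uses it as a dissimilarity. Global rescaling is the special case $\phi(t)=ct$ with $c>0$.

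The only delicate point is the tie-breaking randomization in the exchangeability step. I expect that handling it through the independent random ordering $\sigma$, as described above, settles it without further work.
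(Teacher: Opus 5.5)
Your proposal is correct and follows essentially the same route as the paper: exchangeability gives a uniform marginal label, so $\log 3-\Htri=H(L)-H(L\mid V)=I\{L;V\}$; the bounds and the equality case come from the pointwise bounds on $h$, averaged over $V$; and invariance holds because a common strictly increasing transformation preserves every argmax set, including ties. Your explicit coupling of the tie-breaking through an independent uniform ordering $\sigma$ adds rigor that the paper leaves implicit in the phrase ``exchangeability of $(V_1,V_2,V_3)$''.
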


\begin{proof}
Exchangeability of $(V_1,V_2,V_3)$ gives
$\Pp\{L_V(X)=i\}=1/3$ for each $i$, so
$H\{L_V(X)\}=\log 3$. By \cref{eq:def-H3},
$\Htri(P,\metric)=H\{L_V(X)\mid V\}$; subtracting conditional entropy from marginal entropy proves \cref{eq:general-mi}. Nonnegativity of entropy gives the lower bound, and the upper bound also follows from the information identity. Equivalently, for every $v$, $0\le h\{q(v)\}\le\log 3$, with equality at the upper end exactly when $q(v)$ is uniform; averaging proves the equality characterization. Finally, a common strictly increasing transformation preserves every farthest-label ordering and every tie, hence preserves $q(v)$ and $\Htri$. The transformed dissimilarity need not satisfy the triangle inequality for this last comparison statement because only the induced labels are used.
\end{proof}

The conditioning in \cref{eq:def-H3} is essential. The unconditional label is always uniform, whereas $\Htri$ measures how much observing the random prototypes makes that label predictable. Thus $\log 3-\Htri$ is the information about the prototype configuration carried by one farthest label.

\subsection{Comparison-only estimation and Monte Carlo uncertainty}

Let an oracle return the randomized label $L_v(x)$ from the three comparisons among $\metric(x,v_1),\metric(x,v_2),\metric(x,v_3)$. For prototype triples $V^{(1)},\ldots,V^{(M)}$ sampled independently from $P^3$ and, for each triple, independent witnesses $X_{\ell1},\ldots,X_{\ell N}$ sampled from $P$, define
\begin{equation}
 \widehat q_{\ell i}=\frac1N\sum_{m=1}^N
 \one\{L_{V^{(\ell)}}(X_{\ell m})=i\},\qquad
 \widehat H_{M,N}=\frac1M\sum_{\ell=1}^M h(\widehat q_{\ell}).
 \label{eq:comparison-estimator}
\end{equation}

\begin{proposition}[Consistency from comparisons]
\label{prop:comparison-consistency}
If the prototype triples and witnesses in \cref{eq:comparison-estimator} are sampled independently from $P^3$ and $P$, respectively, then
\begin{equation}
 \widehat H_{M,N}\longrightarrow \Htri(P,\metric)
 \quad\text{in probability whenever }M,N\to\infty.
\end{equation}
The same result holds for the corrected estimator in \cref{eq:miller}.
\end{proposition}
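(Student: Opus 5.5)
We split the error into a witness-sampling term and a prototype-sampling term:
\begin{equation*}
 \widehat H_{M,N}-\Htri(P,\metric)
 =\underbrace{\frac1M\sum_{\ell=1}^M\bigl[h(\widehat q_\ell)-h\{q(V^{(\ell)})\}\bigr]}_{T_1}
 +\underbrace{\frac1M\sum_{\ell=1}^M h\{q(V^{(\ell)})\}-\E_V h\{q(V)\}}_{T_2}.
\end{equation*}
The two terms are handled separately, and both bounds are uniform so that no relation between the rates of $M$ and $N$ is needed.

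\textbf{The term $T_2$.} It is an average of i.i.d.\ variables with values in $[0,\log 3]$, by \cref{prop:ordinal}. Their mean is $\Htri$, so Chebyshev gives $\Pp(|T_2|>\eta)\le(\log 3)^2/(M\eta^2)$. This tends to zero as $M\to\infty$, regardless of $N$.

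\textbf{The term $T_1$.} Conditional on $V^{(\ell)}=v$, the vector $N\widehat q_\ell$ is multinomial$(N,q(v))$. This holds because the witnesses are independent of the triple and the tie randomization is independent across witnesses. The entropy $h$ is continuous on the compact simplex, hence uniformly continuous. Let $\omega$ be its modulus of continuity in $\ell_1$, with $\omega(t)\to0$ as $t\to0$ and $\omega\le\log 3$. Then
\begin{equation*}
 \E|h(\widehat q_\ell)-h\{q(V^{(\ell)})\}|
 \le \omega(\delta)+\log 3\cdot\Pp\bigl(\|\widehat q_\ell-q(V^{(\ell)})\|_1>\delta\bigr)
 \le \omega(\delta)+\log 3\cdot\frac{3}{4N\delta^2}\cdot 3.
\end{equation*}
Here each coordinate variance is at most $1/(4N)$, and we use a union bound with Chebyshev per coordinate at level $\delta/3$; only the form $C/(N\delta^2)$ matters. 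This bound is uniform in $v$ and in $\ell$. Hence $\E|T_1|\le\omega(\delta)+C/(N\delta^2)$ for every $M$. Sending $N\to\infty$ and then $\delta\to0$ shows $\E|T_1|\to0$, so $T_1\to0$ in probability by Markov's inequality. One could get an explicit rate instead by using $|h(p)-h(q)|\le\|p-q\|_1\log(2/\|p-q\|_1)$-type bounds, but the continuity argument suffices.

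\textbf{Corrected estimator and the main obstacle.} The main point needing care is that the limit is joint in $(M,N)$. This is why both bounds above are uniform: the first holds for every $N$, the second for every $M$. For the corrected estimator in \cref{eq:miller}, presumably $h(\widehat q_\ell)$ plus a Miller term such as $(k_\ell-1)/(2N)$ with $k_\ell\le3$ observed categories, the correction is deterministically at most $1/N$ in absolute value. The same holds for any correction bounded by $C/N$. The difference between the corrected and uncorrected estimators therefore tends to zero surely, and consistency transfers. If \cref{eq:miller} instead defines a correction that depends on the data more intricately, the only requirement is that it is $o(1)$ uniformly in the triple, and I would verify that bound directly from its definition.
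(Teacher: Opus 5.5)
Your proposal is correct and follows the paper's proof essentially step for step. It uses the same decomposition into a witness term and a prototype term, Markov's inequality on the witness term via $\E|h(\widehat q_\ell)-h\{q(V^{(\ell)})\}|\to0$, Chebyshev on the prototype term, and the deterministic $1/N$ bound on the correction; you make the witness bound quantitative and uniform in $v$ through the modulus of continuity, where the paper only invokes continuity and boundedness. Two small notes: the paper's correction also caps at $\log 3$, which only shrinks the change since $\widehat h_\ell\le\log 3$; and your Chebyshev/union-bound constant should be $27/(4N\delta^2)$ rather than $9/(4N\delta^2)$, which is immaterial, as you say.
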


\begin{proof}
Conditionally on $V^{(\ell)}=v$, the label counts are multinomial with category probabilities $q(v)$. Hence
$\widehat q_\ell\to q(V^{(\ell)})$ in probability as $N\to\infty$. Since entropy is continuous and bounded by $\log 3$ on the probability simplex,
\begin{equation}
  a_N:=\E\left|h(\widehat q_\ell)-h\{q(V^{(\ell)})\}\right|
  \longrightarrow0.
  \label{eq:witness-L1}
\end{equation}
Write
\begin{align}
  \widehat H_{M,N}-\Htri(P,\metric)
  &=A_{M,N}+B_M,\label{eq:consistency-decomposition}\\
  A_{M,N}
  &=\frac1M\sum_{\ell=1}^M
    \left[h(\widehat q_\ell)-h\{q(V^{(\ell)})\}\right],\\
  B_M
  &=\frac1M\sum_{\ell=1}^M
    \left[h\{q(V^{(\ell)})\}-\Htri(P,\metric)\right].
\end{align}
For every $\varepsilon>0$, Markov's inequality and \cref{eq:witness-L1} give
\begin{equation}
  \Pp\{|A_{M,N}|>\varepsilon/2\}
  \le \frac{2\E|A_{M,N}|}{\varepsilon}
  \le \frac{2a_N}{\varepsilon}.
  \label{eq:markov-witness}
\end{equation}
The summands defining $B_M$ are independent and bounded in absolute value by $\log 3$, so
\begin{equation}
  \operatorname{Var}(B_M)
  \le\frac{(\log 3)^2}{M}.
\end{equation}
Chebyshev's inequality therefore yields
\begin{equation}
  \Pp\{|B_M|>\varepsilon/2\}
  \le\frac{4(\log 3)^2}{M\varepsilon^2}.
  \label{eq:chebyshev-prototypes}
\end{equation}
Combining \cref{eq:consistency-decomposition,eq:markov-witness,eq:chebyshev-prototypes} with the union bound gives
\begin{equation}
  \Pp\left\{
    |\widehat H_{M,N}-\Htri(P,\metric)|>\varepsilon
  \right\}
  \le
  \frac{2a_N}{\varepsilon}
  +\frac{4(\log 3)^2}{M\varepsilon^2}
  \longrightarrow0
\end{equation}
whenever $M,N\to\infty$. Finally, the correction in \cref{eq:miller} changes each plug-in entropy by at most $1/N$, so it does not change the limit.
\end{proof}

The term comparison-only refers to the geometric observations used after sampling: the construction assumes independent access to the target population $P$, but records only farthest labels. If queries or prototypes are drawn with different weights, the estimator targets the corresponding weighted population functional. No claim is made that the labels alone identify the sampling law or the shell calibration parameters.

The sampling law $P$ and dissimilarity $\metric$ are treated as fixed.
The remaining question is how to quantify the Monte Carlo error caused
by using finitely many prototype triples and witness labels. For each
prototype triple $V^{(\ell)}$, $\ell=1,\ldots,M$, define
\[
  \widehat h_\ell
  =
  h(\widehat q_\ell)
  =
  -\sum_{i=1}^3
  \widehat q_{\ell i}\log \widehat q_{\ell i},
\]
with the convention $0\log 0=0$. Thus $N$ controls the approximation
of the conditional cell probabilities $q(V^{(\ell)})$, whereas $M$
controls the approximation of the outer expectation over prototype
triples.

Because the plug-in entropy has a downward finite-$N$ bias, let
\[
  \widehat s_\ell
  =
  \#\{i:\widehat q_{\ell i}>0\}
\]
and use the adaptive Miller--Madow correction \cite{miller1955,paninski2003}
\begin{equation}
  \widehat h_{\ell,\mathrm{MM}}
  =
  \min\left\{
    \widehat h_\ell+\frac{\widehat s_\ell-1}{2N},
    \log 3
  \right\},
  \qquad
  \widehat H_{M,N}^{\mathrm{MM}}
  =
  \frac{1}{M}\sum_{\ell=1}^M
  \widehat h_{\ell,\mathrm{MM}} .
  \label{eq:miller}
\end{equation}
When all three labels are observed, the correction is $1/N$. Near the
boundary of the probability simplex, it remains a leading-order bias
correction rather than an exactly unbiased estimator.

The Monte Carlo standard error reported in the experiments is
\begin{equation}
  \widehat{\operatorname{se}}_{\mathrm{MC}}
  =
  \left[
    \frac{1}{M(M-1)}
    \sum_{\ell=1}^M
    \left(
      \widehat h_{\ell,\mathrm{MM}}
      -
      \widehat H_{M,N}^{\mathrm{MM}}
    \right)^2
  \right]^{1/2}.
  \label{eq:monte-carlo-se}
\end{equation}
It measures the sampling variability of the Monte Carlo average across
independent prototype triples, including the variability introduced by
the finite witness samples. It does not account for residual finite-$N$
bias, uncertainty from observing only a finite population sample, or
uncertainty introduced by estimating an embedding, metric scale, or
sampling weights.

The procedure requires $MN$ farthest-label observations. When the
labels are computed directly from coordinates in $\mathbb R^d$, its
arithmetic cost is $O(MNd)$, and witnesses may be processed in blocks.

\subsection{Common limiting ranking model and master curve}

The ranking model introduced below is not an auxiliary assumption. It is
the common leading-order form of the Euclidean and hyperbolic
farthest-distance comparisons derived in the following sections.

\paragraph{Euclidean preview.}
Let the query be $X_{0,d}=R_{0,d}U_{0,d}$ and let
$X_{i,d}=R_{i,d}U_{i,d}$, $i=1,2,3$ be prototypes. Their squared
Euclidean distances satisfy
\begin{equation}
  \|X_{0,d}-X_{i,d}\|^2
  =
  R_{0,d}^2+R_{i,d}^2
  -2R_{0,d}R_{i,d}
  \langle U_{0,d},U_{i,d}\rangle.
  \label{eq:euclidean-preview-distance}
\end{equation}
Write
\[
  R_{i,d}=\rho_d+\epsilon_d\xi_{i,d},
  \qquad
  \lambda_d=\sqrt d\,\frac{\epsilon_d}{\rho_d},
  \qquad
  Z_{i,d}
  =
  \sqrt d\,\langle U_{0,d},U_{i,d}\rangle.
\]
After subtracting terms common to the three prototypes and normalizing
by the angular scale $2\rho_d^2/\sqrt d$, the leading score is
\begin{equation}
  S_{i,d}^{\mathrm E}
  \approx
  \lambda_d\xi_{i,d}-Z_{i,d}.
  \label{eq:euclidean-ranking-preview}
\end{equation}
The first term records the prototype's radial advantage, whereas the
second records its angular relation to the query. In high dimension,
the finite collection $(Z_{1,d},Z_{2,d},Z_{3,d})$ is asymptotically
standard Gaussian. The Euclidean farthest-prototype problem therefore
becomes a Gaussian noisy-ranking problem. The score reduction in
\cref{eq:euc-score} makes this approximation precise and controls its
remainder.

\paragraph{Hyperbolic preview.}
In hyperbolic space of curvature $-\kappa_d^2$, the distance ordering is
the ordering of
\begin{equation}
  C_{i,d}
  =
  \cosh(\kappa_dR_{0,d})\cosh(\kappa_dR_{i,d})
  -
  \sinh(\kappa_dR_{0,d})\sinh(\kappa_dR_{i,d})
  \langle U_{0,d},U_{i,d}\rangle.
  \label{eq:hyperbolic-ranking-preview}
\end{equation}
Let $s_d=\kappa_d\rho_d$. A first-order expansion in the prototype
radius gives the normalized leading score
\begin{equation}
  S_{i,d}^{\mathbb H}
  \approx
  \sqrt d\,\kappa_d\epsilon_d
  \coth(\kappa_dR_{0,d})\xi_{i,d}
  -
  Z_{i,d}.
  \label{eq:hyperbolic-preview-query-radius}
\end{equation}
Under the curvature-scale thinness condition introduced in
\cref{eq:hyper-thin},
\[
  \coth(\kappa_dR_{0,d})
  \approx
  \coth(\kappa_d\rho_d).
\]
Consequently,
\begin{align}
  S_{i,d}^{\mathbb H}
  &\approx
  \lambda_{d,\mathbb H}\xi_{i,d}-Z_{i,d},\\
  \lambda_{d,\mathbb H}
  &=
  \sqrt d\,\kappa_d\epsilon_d
  \coth(\kappa_d\rho_d)\\
  &=
  \lambda_d A(s_d),
  \qquad
  A(s)=s\coth s.
  \label{eq:hyperbolic-ranking-preview-reduced}
\end{align}
Thus curvature changes the radial-to-angular signal ratio but does not
produce a different limiting ranking model. The reduction in
\cref{eq:hyper-reduction} supplies the rigorous remainder control.

\paragraph{Common limiting experiment.}
Let $F$ be the limiting distribution of the standardized radial
fluctuation, and let
\[
  W_1,W_2,W_3\stackrel{\mathrm{iid}}{\sim}F,
  \qquad
  Z_1,Z_2,Z_3\stackrel{\mathrm{iid}}{\sim}N(0,1),
\]
with the two triples independent. Motivated by
\cref{eq:euclidean-ranking-preview,eq:hyperbolic-ranking-preview-reduced},
define
\begin{equation}
  Y_i(\lambda)=\lambda W_i-Z_i,
  \qquad
  L_\lambda
  =
  \arg\max_{1\leq i\leq3}Y_i(\lambda).
  \label{eq:limiting-ranking-model}
\end{equation}
Conditionally on $(W_1,W_2,W_3)$, this is a three-alternative Gaussian
random-utility, or multinomial-probit, model \cite{hausmanwise1978}.
Our contribution is its derivation from high-dimensional distance
comparisons and the resulting entropy functional.
Here $\lambda$ is the limiting radial-to-angular signal ratio:
$\lambda=\lim\lambda_d$ in the Euclidean model and
$\lambda=\lim\lambda_{d,\mathbb H}$ in the hyperbolic model.

For a deterministic radial triple $w=(w_1,w_2,w_3)$, the conditional
probability that prototype $i$ wins is
\begin{align}
  q_i^\lambda(w)
  &=
  \mathbb P\!\left(
    L_\lambda=i\mid W_1=w_1,W_2=w_2,W_3=w_3
  \right)\\
  &=
  \int_{-\infty}^{\infty}
  \phi(z)
  \prod_{j\neq i}
  \left[
    1-\Phi\{z+\lambda(w_j-w_i)\}
  \right]
  \,\mathrm dz.
  \label{eq:q-integral}
\end{align}
Indeed, conditional on $Z_i=z$, prototype $i$ wins exactly when
\[
  Z_j\geq z+\lambda(w_j-w_i)
  \qquad\text{for every }j\neq i.
\]

The master entropy curve associated with the radial law $F$ is
\begin{equation}
  H_\infty(\lambda;F)
  =
  \mathbb E
  \left[
    h\{q^\lambda(W_1,W_2,W_3)\}
  \right].
  \label{eq:Hinf}
\end{equation}
This is the common entropy limit produced by both geometries. The
effective parameter $\lambda$ determines which of three regimes occurs:
angular domination at $\lambda=0$, competition at finite positive
$\lambda$, and radial domination as $\lambda\to\infty$.

Exchangeability gives $\mathbb P(L_\lambda=i)=1/3$, so
\begin{equation}
  \log 3-H_\infty(\lambda;F)
  =
  I(L_\lambda;W_1,W_2,W_3).
  \label{eq:mi}
\end{equation}
Thus the entropy defect measures how much the farthest label reveals
about the prototype radial fluctuations. This identity does not imply
that $H_\infty(\lambda;F)$ is monotone for every radial law; inversion
still requires a verified monotone operating interval.

\paragraph{Remark.}
At the level of the limiting ranking experiment, the construction extends
directly to any fixed number \(m>3\) of prototypes and to independent radial
and angular fluctuations after replacing the Gaussian terms \(Z_i\) by the
appropriate limiting angular variables; dependence among angular scores or
radial--angular dependence requires a multivariate, model-specific theory.

\section{Euclidean shell law}

\subsection{Model and limiting notation}

Let
\[
  X_d=R_dU_d\in\mathbb R^d,
  \qquad
  U_d\sim\operatorname{Unif}(S^{d-1}),
  \qquad
  R_d\perp U_d,
\]
and write
\begin{equation}
  R_d
  =
  \rho_d+\epsilon_d\xi_d
  =
  \rho_d(1+\tau_d\xi_d),
  \qquad
  \tau_d=\frac{\epsilon_d}{\rho_d},
  \qquad
  \lambda_d=\sqrt d\,\tau_d.
  \label{eq:euclidean-radial-parameters}
\end{equation}
Assume that $R_d>0$ almost surely,
\[
  \tau_d\longrightarrow0,
  \qquad
  \xi_d\Rightarrow\Xi,
\]
and let $F=\mathcal L(\Xi)$.  The condition $\tau_d\to0$ is the
relative-thinness assumption \cite{heiny2022}.  It permits the effective
radial-to-angular ratio $\lambda_d$ to converge to zero, to a finite
positive constant, or to infinity, provided that
$\lambda_d=o(\sqrt d)$.  For example,
$\tau_d=d^{-\alpha}$ with $0<\alpha<1/2$ gives
$\lambda_d\to\infty$.

Let $X_{0,d}$ be an independent query and let
\[
  V_d=(X_{1,d},X_{2,d},X_{3,d})
\]
be the prototype triple.  Define the finite-dimensional conditional
cell-probability vector
\begin{equation}
  Q_d=(Q_{d1},Q_{d2},Q_{d3}),
  \qquad
  Q_{di}
  =
  \mathbb P\!\left(
    L_{V_d}(X_{0,d})=i\mid V_d
  \right).
  \label{eq:euclidean-conditional-q}
\end{equation}
Thus
\begin{equation}
  H_3(P_d,d_{\mathrm{Euc}})
  =
  \mathbb E\{h(Q_d)\}.
  \label{eq:euclidean-H3-Q}
\end{equation}

For clarity, we also recall the corresponding limiting notation.
Let
\[
  \Xi_1,\Xi_2,\Xi_3\stackrel{\mathrm{iid}}{\sim}F,
  \qquad
  Z_1,Z_2,Z_3\stackrel{\mathrm{iid}}{\sim}N(0,1),
\]
with the two triples independent.  For a deterministic radial triple
$w=(w_1,w_2,w_3)$ and $\lambda\geq0$, define
\begin{equation}
  q_i^\lambda(w)
  =
  \mathbb P\!\left(
    \lambda w_i-Z_i
    =
    \max_{1\leq j\leq3}\{\lambda w_j-Z_j\}
  \right),
  \qquad i=1,2,3,
  \label{eq:euclidean-q-lambda}
\end{equation}
and set
\[
  q^\lambda(w)
  =
  \bigl(q_1^\lambda(w),q_2^\lambda(w),q_3^\lambda(w)\bigr).
\]
For fixed $w$, this is a deterministic probability vector obtained by
averaging over the Gaussian variables $Z_1,Z_2,Z_3$.  Evaluating it at
the random radial triple gives the random vector
\[
  q^\lambda(\Xi_1,\Xi_2,\Xi_3).
\]
The master entropy is
\begin{equation}
  H_\infty(\lambda;F)
  =
  \mathbb E
  \left[
    h\{q^\lambda(\Xi_1,\Xi_2,\Xi_3)\}
  \right],
  \label{eq:euclidean-H-infinity}
\end{equation}
where the remaining expectation is over
$(\Xi_1,\Xi_2,\Xi_3)$.

\subsection{Score reduction}

\begin{lemma}[Euclidean score reduction]
\label{lem:euclidean-score-reduction}
For the independent query $(R_{0,d},U_{0,d})$ and prototypes
$(R_{i,d},U_{i,d})$, $i=1,2,3$, the farthest-distance ordering is the
ordering of
\begin{equation}
  S_{i,d}
  =
  \lambda_d\xi_{i,d}-Z_{i,d}+r_{i,d},
  \qquad
  Z_{i,d}
  =
  \sqrt d\,\langle U_{0,d},U_{i,d}\rangle.
  \label{eq:euc-score}
\end{equation}
Conditionally on the prototypes,
\[
  (Z_{1,d},Z_{2,d},Z_{3,d})
  \Rightarrow N(0,I_3)
\]
in probability \cite{diaconisfreedman1987}.  Moreover,
\[
  \max_{1\leq i\leq3}|r_{i,d}|
  \xrightarrow{p}0
\]
when $\lambda_d=O(1)$, whereas
\[
  \frac{1}{\lambda_d}
  \max_{1\leq i\leq3}|r_{i,d}|
  \xrightarrow{p}0
\]
when $\lambda_d\to\infty$.
\end{lemma}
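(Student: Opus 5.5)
We start from the expansion \cref{eq:euclidean-preview-distance}. Since $R_{0,d}^2$ is common to all three prototypes and division by the positive quantity $2\rho_dR_{0,d}/\sqrt d$ preserves ordering, the farthest-distance ordering is the ordering of
\[
  \frac{\sqrt d}{2\rho_dR_{0,d}}\bigl(R_{i,d}^2-2R_{0,d}R_{i,d}\langle U_{0,d},U_{i,d}\rangle\bigr)
  =\frac{\sqrt d\,R_{i,d}^2}{2\rho_dR_{0,d}}-\frac{R_{i,d}}{\rho_d}Z_{i,d}.
\]
We then substitute $R_{i,d}=\rho_d(1+\tau_d\xi_{i,d})$ and $R_{0,d}=\rho_d(1+\tau_d\xi_{0,d})$. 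The first term equals $\tfrac{\sqrt d}{2}(1+\tau_d\xi_{i,d})^2/(1+\tau_d\xi_{0,d})$. Its $i$-independent part $\tfrac{\sqrt d}{2}(1+\tau_d\xi_{0,d})^{-1}$ may be subtracted. What remains is
\[
  \frac{\sqrt d\,(2\tau_d\xi_{i,d}+\tau_d^2\xi_{i,d}^2)}{2(1+\tau_d\xi_{0,d})}
  =\lambda_d\xi_{i,d}\bigl(1+O_p(\tau_d)\bigr)+\tfrac12\lambda_d\tau_d\xi_{i,d}^2\bigl(1+O_p(\tau_d)\bigr).
\]
Here we use $\xi_{j,d}=O_p(1)$, which follows from tightness since $\xi_d\Rightarrow\Xi$, together with $\tau_d\to0$. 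The second term equals $-Z_{i,d}-\tau_d\xi_{i,d}Z_{i,d}$. We therefore define
\[
  r_{i,d}=O_p(\lambda_d\tau_d)-\tau_d\xi_{i,d}Z_{i,d},
\]
where the $O_p(\lambda_d\tau_d)$ term collects $\lambda_d\xi_{i,d}[(1+\tau_d\xi_{0,d})^{-1}-1]$ and the quadratic term. This gives the exact identity \cref{eq:euc-score}.

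The Gaussian part comes next. Conditionally on the prototypes, $U_{0,d}$ is uniform on $S^{d-1}$ and independent of the prototypes. The vector $(Z_{i,d})_{i\le3}$ therefore has the law of $\sqrt d$ times the projections of a uniform point onto the three fixed unit vectors $U_{1,d},U_{2,d},U_{3,d}$. These vectors have Gram matrix $G_d$ with off-diagonal entries $\langle U_{i,d},U_{j,d}\rangle=O_p(d^{-1/2})$. By the Diaconis--Freedman bound \cite{diaconisfreedman1987}, $\sqrt d$ times the projection of a uniform point onto a fixed three-dimensional subspace is within total variation $O(1/d)$ of $N(0,I_3)$ in orthonormal coordinates. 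Transporting to the (non-orthonormal) frame multiplies the covariance by $G_d\to I_3$ in probability. Hence the conditional law of $(Z_{1,d},Z_{2,d},Z_{3,d})$ converges to $N(0,I_3)$ in probability, for instance in bounded-Lipschitz distance. In particular $Z_{i,d}=O_p(1)$ unconditionally, since $\E Z_{i,d}^2=1$ exactly.

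Finally, we bound the remainder in each regime. We have $|r_{i,d}|\le O_p(\lambda_d\tau_d)+\tau_d|\xi_{i,d}||Z_{i,d}|=O_p(\lambda_d\tau_d+\tau_d)$. If $\lambda_d=O(1)$, this is $O_p(\tau_d)\to0$. If $\lambda_d\to\infty$, then $r_{i,d}/\lambda_d=O_p(\tau_d+\tau_d/\lambda_d)\to0$. Taking the maximum over three indices preserves both statements.

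The main obstacle is uniformity in the expansion of $(1+\tau_d\xi_{0,d})^{-1}$. The variable $\xi_{0,d}$ is only tight, not bounded, and $R_{0,d}>0$ guarantees $1+\tau_d\xi_{0,d}>0$. We handle this by working on the event $\{|\tau_d\xi_{0,d}|\le1/2\}$, whose probability tends to one. On this event the geometric-series bound $|(1+x)^{-1}-1|\le2|x|$ holds, and restricting to it is harmless for convergence in probability.
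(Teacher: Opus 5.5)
Your proof is correct and follows essentially the paper's route: expand the squared distance, drop the terms common to the three prototypes, normalize by the angular scale, control the remainder by tightness of the $\xi_{j,d}$ and $\tau_d\to0$, and get the conditional Gaussian limit from $G_d\to I_3$. The differences are cosmetic. The paper divides by the deterministic $2\rho_d^2/\sqrt d$, which gives the exact polynomial score $\lambda_d\xi_{i,d}+\tfrac12\lambda_d\tau_d\xi_{i,d}^2-(1+\tau_d\xi_{0,d})(1+\tau_d\xi_{i,d})Z_{i,d}$ with no inverse $(1+\tau_d\xi_{0,d})^{-1}$, so the obstacle you flagged never arises; and its appendix obtains the Gaussian step from $U_{0,d}=g_d/\|g_d\|$ rather than from the Diaconis--Freedman total-variation bound, both of which are valid.
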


\begin{proof}
For each prototype,
\[
  \|X_{0,d}-X_{i,d}\|^2
  =
  R_{0,d}^2+R_{i,d}^2
  -
  2R_{0,d}R_{i,d}
  \langle U_{0,d},U_{i,d}\rangle.
\]
Remove the query term $R_{0,d}^2$ and the common prototype term
$\rho_d^2$, and divide by the positive angular scale
$2\rho_d^2/\sqrt d$.  The resulting exact score is
\begin{equation}
  \lambda_d\xi_{i,d}
  +
  \frac{\lambda_d\tau_d}{2}\xi_{i,d}^2
  -
  (1+\tau_d\xi_{0,d})
  (1+\tau_d\xi_{i,d})Z_{i,d}.
  \label{eq:euc-exact}
\end{equation}

Because $\xi_d\Rightarrow\Xi$, the variables
$\xi_{0,d},\xi_{1,d},\xi_{2,d},\xi_{3,d}$ are tight.  Together with
$\tau_d\to0$, this shows that the difference between
\eqref{eq:euc-exact} and
$\lambda_d\xi_{i,d}-Z_{i,d}$ is $o_p(1)$ when
$\lambda_d=O(1)$ and $o_p(\lambda_d)$ when
$\lambda_d\to\infty$.

Conditionally on the prototype directions, the covariance matrix of
$(Z_{1,d},Z_{2,d},Z_{3,d})$ is their Gram matrix,
\[
  \Gamma_d
  =
  \bigl(\langle U_{i,d},U_{j,d}\rangle\bigr)_{i,j=1}^3.
\]
The off-diagonal entries converge to zero in probability, so
$\Gamma_d\to I_3$ in probability. The spherical Poincar\'e lemma then gives
the stated conditional Gaussian limit.
\end{proof}

\subsection{Entropy trichotomy}

\begin{theorem}[Euclidean shell trichotomy]
\label{thm:euclidean-shell-trichotomy}
Under the preceding assumptions:

\begin{enumerate}
  \item if $\lambda_d\to0$, then
  \[
    H_3(P_d,d_{\mathrm{Euc}})\longrightarrow\log 3;
  \]

  \item if $\lambda_d\to\lambda\in(0,\infty)$, then
  \[
    H_3(P_d,d_{\mathrm{Euc}})
    \longrightarrow
    H_\infty(\lambda;F);
  \]

  \item if $\lambda_d\to\infty$ and $F$ is continuous, then
  \[
    H_3(P_d,d_{\mathrm{Euc}})\longrightarrow0.
  \]
\end{enumerate}
\end{theorem}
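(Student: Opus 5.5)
The plan is to prove convergence in distribution of the conditional cell-probability vector $Q_d$ to the appropriate limit, and then pass to entropies. Since $h$ is continuous and bounded by $\log 3$ on the simplex, the bounded convergence theorem (via Skorokhod or the portmanteau theorem) turns $Q_d\Rightarrow Q$ into $\E h(Q_d)\to\E h(Q)$, using \cref{eq:euclidean-H3-Q}. The three target laws for $Q$ are the point mass at $(1/3,1/3,1/3)$ in case 1, the law of $q^\lambda(\Xi_1,\Xi_2,\Xi_3)$ in case 2, and the law of a random vertex of the simplex in case 3. In each case I will show that the relevant limits have $h(Q)=\log 3$, $h(Q)$ as in \cref{eq:euclidean-H-infinity}, and $h(Q)=0$ respectively.

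\textbf{Cases 1 and 2.} By \cref{lem:euclidean-score-reduction}, conditionally on $V_d$ the ordering is that of $S_{i,d}=\lambda_d\xi_{i,d}-Z_{i,d}+r_{i,d}$, with $\max_i|r_{i,d}|\to0$ in probability since $\lambda_d=O(1)$.

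\begin{enumerate}
\item I will couple along subsequences. The prototype radii satisfy $(\xi_{1,d},\xi_{2,d},\xi_{3,d})\Rightarrow(\Xi_1,\Xi_2,\Xi_3)$. Conditionally on the prototypes, $(Z_{i,d})\Rightarrow N(0,I_3)$ in probability, with $\xi_{0,d}$ tight and independent of everything else.
\item Define the conditional law of $(\xi_{i,d},Z_{i,d},r_{i,d})$ given $V_d$. The map $(w,\text{conditional law of }Z)\mapsto$ argmax probabilities is continuous at Gaussian limits, because $Y_i=\lambda w_i-Z_i$ has a continuous law with no ties. Hence $Q_{di}-q_i^{\lambda_d}(\xi_{1,d},\xi_{2,d},\xi_{3,d})\to0$ in probability.
\item To make step 2 rigorous, bound the discrepancy by $\Pp(\text{some gap }|Y_i-Y_j|\le2\max|r|+\delta\mid V_d)$ plus the conditional total-variation/weak error of $Z$. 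Handle the remainder by Markov's inequality applied to $\Pp(\max|r|>\eta\mid V_d)$.
\item Finally, $q^\lambda(w)$ is jointly continuous in $(\lambda,w)$ by dominated convergence in \cref{eq:q-integral}. This gives $Q_d\Rightarrow q^\lambda(\Xi)$.
\end{enumerate}

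For $\lambda=0$ the scores are exchangeable Gaussians, so $q^0\equiv(1/3,1/3,1/3)$ regardless of $F$, which yields case 1.

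\textbf{Case 3.} Divide the scores by $\lambda_d$: $S_{i,d}/\lambda_d=\xi_{i,d}-Z_{i,d}/\lambda_d+r_{i,d}/\lambda_d$. By the lemma, the last term is $o_p(1)$. The $Z$ term is $o_p(1)$ because, conditionally on $V_d$, $Z_{i,d}$ has bounded second moment exactly: $\E[Z_{i,d}^2\mid V_d]=1$. So, conditionally on $V_d$, the ordering coincides with that of $(\xi_{1,d},\xi_{2,d},\xi_{3,d})$ except on an event of small conditional probability, as long as the minimal gap $\min_{i\ne j}|\xi_{i,d}-\xi_{j,d}|$ is not small.

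Continuity of $F$ gives $\Pp(\min_{i\ne j}|\Xi_i-\Xi_j|\le\delta)\to0$ as $\delta\to0$. By the portmanteau theorem (closed set), the same bound holds asymptotically for $\xi_d$. Hence with high probability $Q_d$ is within $o(1)$ of the vertex $e_{\arg\max\xi_{i,d}}$, so $h(Q_d)\to0$ in probability, and bounded convergence finishes.

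\textbf{Main obstacle.} The hardest step is the conditional, in-probability transfer in steps 2 and 3 of Cases 1 and 2. The lemma gives unconditional smallness of $r_{i,d}$, which is a joint query–prototype quantity, while $Q_d$ needs smallness conditionally on $V_d$. I would use that $\E[\Pp(\max|r|>\eta\mid V_d)]\to0$ implies $\Pp(\max|r|>\eta\mid V_d)\to0$ in probability. I would combine this with the anti-concentration of the limiting Gaussian gaps, noting that the conditional law of $Z_i-Z_j$ is nearly $N(0,2)$ and has bounded density. Together these give a uniform-in-$V_d$ bound on $|Q_d-q^{\lambda_d}(\xi_d)|$ on a high-probability set of prototype configurations.
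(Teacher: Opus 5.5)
Your proposal is correct and follows essentially the same route as the paper. In both, you establish convergence of the conditional cell-probability vector $Q_d$: to the uniform vector, to $q^\lambda(\Xi_1,\Xi_2,\Xi_3)$, or to a vertex via the rescaled scores $S_{i,d}/\lambda_d$ and a radial-gap argument using continuity of $F$. You then pass to $\E\{h(Q_d)\}$ using boundedness and continuity of $h$. Treating case 1 as the $\lambda=0$ instance of case 2, and handling the conditional transfer explicitly (Markov's inequality on $\Pp(\max_i|r_{i,d}|>\eta\mid V_d)$, anti-concentration of $Z_i-Z_j$, and $\E[Z_{i,d}^2\mid V_d]=1$), fills in steps the paper asserts more briefly rather than changing the argument.
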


\begin{proof}
We consider the three regimes separately.

\begin{enumerate}
\item \textbf{Case $\lambda_d\to0$.}

By \cref{lem:euclidean-score-reduction}, conditionally on the
prototypes, the limiting score vector is
\[
  (-Z_1,-Z_2,-Z_3).
\]
The three scores are independent and identically distributed and have
no ties almost surely.  Hence each prototype wins with probability
$1/3$, so
\[
  Q_d
  \xrightarrow{p}
  \left(\frac13,\frac13,\frac13\right).
\]
Since entropy is continuous and bounded by $\log 3$,
\[
  H_3(P_d,d_{\mathrm{Euc}})
  =
  \mathbb E\{h(Q_d)\}
  \longrightarrow
  h\left(\frac13,\frac13,\frac13\right)
  =
  \log 3.
\]

\item \textbf{Case $\lambda_d\to\lambda\in(0,\infty)$.}

The score reduction and the joint radial convergence
\[
  (\xi_{1,d},\xi_{2,d},\xi_{3,d})
  \Rightarrow
  (\Xi_1,\Xi_2,\Xi_3)
\]
give the limiting conditional score experiment
\[
  \lambda\Xi_i-Z_i,
  \qquad i=1,2,3.
\]
Because the Gaussian noises have continuous distributions, the
limiting scores have no ties almost surely.  The conditional winning
probabilities therefore satisfy
\begin{equation}
  Q_d
  \Rightarrow
  q^\lambda(\Xi_1,\Xi_2,\Xi_3).
  \label{eq:euclidean-q-convergence}
\end{equation}
Entropy is bounded and continuous on the probability simplex, so
\[
  \mathbb E\{h(Q_d)\}
  \longrightarrow
  \mathbb E
  \left[
    h\{q^\lambda(\Xi_1,\Xi_2,\Xi_3)\}
  \right]
  =
  H_\infty(\lambda;F).
\]

\item \textbf{Case $\lambda_d\to\infty$.}

Divide the score representation in \cref{eq:euc-score} by
$\lambda_d$. \Cref{lem:euclidean-score-reduction} gives
\begin{equation}
  \frac{S_{i,d}}{\lambda_d}
  =
  \xi_{i,d}+e_{i,d},
  \qquad
  \max_{1\leq i\leq3}|e_{i,d}|
  \xrightarrow{p}0.
  \label{eq:euclidean-radial-dominance}
\end{equation}

Let $J_d$ be an index attaining
$\max_i\xi_{i,d}$, and let
\[
  \Delta_d
  =
  \xi_{J_d,d}
  -
  \max_{j\neq J_d}\xi_{j,d}
\]
be the gap between the largest and second-largest radial
fluctuations.  Since
\[
  (\xi_{1,d},\xi_{2,d},\xi_{3,d})
  \Rightarrow
  (\Xi_1,\Xi_2,\Xi_3)
\]
and $F$ is continuous, the limiting radial triple has a unique maximum
almost surely.  Consequently,
\[
  \lim_{\eta\downarrow0}
  \limsup_{d\to\infty}
  \mathbb P(\Delta_d\leq2\eta)
  =
  0.
\]

On the event
\[
  \Delta_d>2\eta,
  \qquad
  \max_i|e_{i,d}|\leq\eta,
\]
the perturbations cannot change the radial ordering, and prototype
$J_d$ is the farthest prototype. More explicitly, conditionally on
$V_d$,
\[
  1-Q_{dJ_d}
  \leq
  \mathbb P\!\left(
    \max_i|e_{i,d}|>\eta\mid V_d
  \right)
  +\mathbf 1\{\Delta_d\leq2\eta\}.
\]
The first term converges to zero in probability because its
expectation does, and the second is made arbitrarily small in
probability by first taking $d\to\infty$ and then $\eta\downarrow0$.
It follows that
\[
  \max_{1\leq i\leq3}Q_{di}
  \xrightarrow{p}1.
\]
Thus $Q_d$ approaches a vertex of the probability simplex and
\[
  h(Q_d)\xrightarrow{p}0.
\]
Finally, since $0\leq h(Q_d)\leq\log 3$,
\[
  H_3(P_d,d_{\mathrm{Euc}})
  =
  \mathbb E\{h(Q_d)\}
  \longrightarrow0.
\]
\end{enumerate}
\end{proof}
\section{Hyperbolic curvature amplification}

Let $\kappa_d>0$, let the model space be $\mathbb H^d_{-\kappa_d^2}$, and represent points in geodesic polar coordinates $(R,U)$. The hyperbolic law of cosines gives the increasing distance score \cite{ratcliffe2019}
\begin{equation}
 C_{i,d}=\cosh(\kappa_dR_{0,d})\cosh(\kappa_dR_{i,d})
 -\sinh(\kappa_dR_{0,d})\sinh(\kappa_dR_{i,d})
  \langle U_{0,d},U_{i,d}\rangle.
 \label{eq:hyper-score-exact}
\end{equation}
Let $s_d=\kappa_d\rho_d$ and impose the curvature-scale thinness condition
\begin{equation}
 b_d=\kappa_d\epsilon_d\coth(s_d)\to0.
 \label{eq:hyper-thin}
\end{equation}
Since $b_d=\tau_d A(s_d)$ and $A(s)\ge1$, this also implies relative thinness. Define
\begin{equation}
 \lambda_{d,\mathbb H}=\sqrt d\,b_d
 =\sqrt d\,\epsilon_d\kappa_d\coth(\kappa_d\rho_d)
 =\lambda_dA(s_d).
 \label{eq:lambda-hyper}
\end{equation}

\begin{lemma}[Hyperbolic score reduction]
\label{lem:hyperbolic-score-reduction}
Under \cref{eq:hyper-thin}, the farthest ordering in \cref{eq:hyper-score-exact} is the ordering of
\begin{equation}
 S^{\mathbb H}_{i,d}=\lambda_{d,\mathbb H}\xi_{i,d}-Z_{i,d}+r^{\mathbb H}_{i,d},
 \label{eq:hyper-reduction}
\end{equation}
with the same conditional Gaussian angular limit as in \cref{lem:euclidean-score-reduction}. The remainder $r^{\mathbb H}_{i,d}$ is $o_p(1)$ if $\lambda_{d,\mathbb H}=O(1)$ and $o_p(\lambda_{d,\mathbb H})$ if $\lambda_{d,\mathbb H}\to\infty$.
\end{lemma}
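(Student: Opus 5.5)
The plan follows the proof of \cref{lem:euclidean-score-reduction}: apply an exact, order-preserving affine normalization to \cref{eq:hyper-score-exact}, then expand in the small prototype perturbation. Write $a_{0}=\kappa_dR_{0,d}$ and $\delta_{i}=\kappa_d\epsilon_d\xi_{i,d}$, so that $\kappa_dR_{i,d}=s_d+\delta_i$. The addition formula gives $\cosh(s_d+\delta_i)=\cosh s_d\cosh\delta_i+\sinh s_d\sinh\delta_i$ and $\sinh(s_d+\delta_i)=\sinh s_d\cosh\delta_i+\cosh s_d\sinh\delta_i$.

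The normalization has two steps. First, subtract the term $\cosh a_0\cosh s_d$, which is common to the three prototypes. Second, divide by the positive quantity $\sinh a_0\sinh s_d/\sqrt d$, which depends only on the query radius and so cannot change the ordering. The exact score is then
\[
 \sqrt d\coth a_0\bigl[\sinh\delta_i+\coth s_d(\cosh\delta_i-1)\bigr]
 -\bigl(\cosh\delta_i+\coth s_d\sinh\delta_i\bigr)Z_{i,d}.
\]
The conditional Gaussian limit of $(Z_{1,d},Z_{2,d},Z_{3,d})$ involves only the directions. It is therefore inherited verbatim from \cref{lem:euclidean-score-reduction}: the Gram matrix tends to $I_3$ and the spherical Poincar\'e lemma applies.

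The remainder is controlled term by term using tightness of $\xi_{0,d},\dots,\xi_{3,d}$ together with \cref{eq:hyper-thin}. For the angular coefficient, $|\delta_i|\coth s_d=b_d|\xi_{i,d}|\to0$ and $\delta_i\to0$, so $(\cosh\delta_i+\coth s_d\sinh\delta_i)Z_{i,d}=Z_{i,d}+o_p(1)$. For the radial part, $\sinh\delta_i=\delta_i(1+O(\delta_i^2))$, and $\coth s_d(\cosh\delta_i-1)=O(\delta_i^2\coth s_d)=\delta_i\cdot O(b_d\xi_{i,d})$. Hence the bracket equals $\delta_i(1+o_p(1))$, and the radial term is $\sqrt d\,\kappa_d\epsilon_d\coth(a_0)\,\xi_{i,d}(1+o_p(1))$.

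The key remaining step is to replace $\coth a_0$ by $\coth s_d$ with a small relative error, uniformly over all curvature scales $s_d$. This is the step I expect to be the main obstacle, because $\coth$ blows up as $s_d\to0$. I would use the mean value theorem for $\log\coth$, whose derivative is $-2/\sinh(2x)$. This gives $|\log\coth a_0-\log\coth s_d|=|\delta_0|/(\sinh x\cosh x)$ for some $x$ between $s_d$ and $s_d+\delta_0$. Since $|\delta_0|/s_d=\tau_d|\xi_{0,d}|\to0$ in probability, $x/s_d\to1$, so $\sinh x\cosh x$ is comparable to $\sinh s_d\cosh s_d$. The bound therefore becomes $O_p(b_d|\xi_{0,d}|/\cosh^2 s_d)\le O_p(b_d)\to0$, which holds uniformly in $s_d$. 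Consequently $\coth a_0=\coth s_d(1+o_p(1))$, and the radial term equals $\lambda_{d,\mathbb H}\xi_{i,d}(1+o_p(1))$.

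Collecting the pieces gives $S^{\mathbb H}_{i,d}=\lambda_{d,\mathbb H}\xi_{i,d}-Z_{i,d}+r^{\mathbb H}_{i,d}$ with
\[
 r^{\mathbb H}_{i,d}=\lambda_{d,\mathbb H}\,o_p(1)+o_p(1).
\]
This remainder is $o_p(1)$ when $\lambda_{d,\mathbb H}=O(1)$ and $o_p(\lambda_{d,\mathbb H})$ when $\lambda_{d,\mathbb H}\to\infty$, which is the claim. Since only three prototypes are involved, the bounds hold for $\max_i|r^{\mathbb H}_{i,d}|$ as well.
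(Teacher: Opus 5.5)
Your proof is correct and follows the paper's argument essentially step for step. It uses the same order-preserving normalization (subtract $\cosh(\kappa_dR_{0,d})\cosh s_d$, divide by $\sinh(\kappa_dR_{0,d})\sinh s_d/\sqrt d$), the same addition-formula identities, the same use of $b_d\xi_{i,d}\to0$ to control the angular coefficient and the quadratic radial term, and the same replacement of $\coth(\kappa_dR_{0,d})$ by $\coth s_d$.

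The only local difference is that you make this replacement with a single mean-value bound on $\log\coth$ that is uniform in $s_d$, rather than the paper's three-regime split ($s_d\to0$, bounded away from zero, $s_d\to\infty$). Your version spares the implicit subsequence argument that the case split needs.

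One step in that bound needs tightening. When $s_d\to\infty$, comparing $\sinh x\cosh x$ with $\sinh s_d\cosh s_d$ requires $|x-s_d|\le|\delta_0|\to0$, which you already have, and not merely $x/s_d\to1$. Alternatively, bypass the comparison by using $\sinh x\cosh x\ge x\ge s_d(1-\tau_d|\xi_{0,d}|)$, which bounds the log-ratio by $O_p(\tau_d)$.
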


\begin{proof}
Set
\[
  s_d=\kappa_d\rho_d,
  \qquad
  a_{0,d}=\kappa_dR_{0,d},
  \qquad
  u_{i,d}=\kappa_d\epsilon_d\xi_{i,d}.
\]
Then
\[
  \kappa_dR_{i,d}=s_d+u_{i,d},
  \qquad
  a_{0,d}=s_d+u_{0,d}
  =s_d(1+\tau_d\xi_{0,d}).
\]

Subtract the common term
$\cosh(a_{0,d})\cosh(s_d)$ from
\eqref{eq:hyper-score-exact} and divide by the positive random scale
\[
  \frac{\sinh(a_{0,d})\sinh(s_d)}{\sqrt d}.
\]
These operations preserve the ordering of the three prototypes.  The
resulting exact normalized score is
\begin{align}
  \widetilde S^{\mathbb H}_{i,d}
  &=
  \sqrt d\,\coth(a_{0,d})
  \frac{
    \cosh(s_d+u_{i,d})-\cosh(s_d)
  }{\sinh(s_d)}
  \notag\\
  &\qquad
  -
  Z_{i,d}
  \frac{\sinh(s_d+u_{i,d})}{\sinh(s_d)},
  \label{eq:hyperbolic-normalized-exact}
\end{align}
where
\[
  Z_{i,d}
  =
  \sqrt d\,\langle U_{0,d},U_{i,d}\rangle.
\]

The hyperbolic addition formulas give the exact identities
\begin{align}
  \frac{
    \cosh(s_d+u)-\cosh(s_d)
  }{\sinh(s_d)}
  &=
  \sinh u+\coth(s_d)(\cosh u-1),
  \label{eq:hyperbolic-cosh-increment}\\
  \frac{\sinh(s_d+u)}{\sinh(s_d)}
  &=
  \cosh u+\coth(s_d)\sinh u.
  \label{eq:hyperbolic-sinh-ratio}
\end{align}
Since
\[
  b_d
  =
  \kappa_d\epsilon_d\coth(s_d)
  \longrightarrow0
\]
and $\coth(s_d)\geq1$, we have
$\kappa_d\epsilon_d\to0$.  Tightness of $\xi_{i,d}$ therefore gives
$u_{i,d}\to0$ in probability.

Using
\[
  \sinh u
  =
  u+\frac{u^3}{6}+O(u^5e^{|u|}),
  \qquad
  \cosh u
  =
  1+\frac{u^2}{2}+O(u^4e^{|u|}),
\]
equations \eqref{eq:hyperbolic-cosh-increment} and
\eqref{eq:hyperbolic-sinh-ratio} become
\begin{align}
  \frac{
    \cosh(s_d+u_{i,d})-\cosh(s_d)
  }{\sinh(s_d)}
  &=
  u_{i,d}
  +
  \frac{\coth(s_d)}{2}u_{i,d}^2
  \notag\\
  &\qquad
  +
  O_p\!\left(
    |u_{i,d}|^3
    +
    \coth(s_d)|u_{i,d}|^4
  \right),
  \label{eq:hyperbolic-radial-taylor}\\
  \frac{\sinh(s_d+u_{i,d})}{\sinh(s_d)}
  &=
  1+\coth(s_d)u_{i,d}
  +\frac{u_{i,d}^2}{2}
  \notag\\
  &\qquad
  +
  O_p\!\left(
    \coth(s_d)|u_{i,d}|^3
    +
    |u_{i,d}|^4
  \right).
  \label{eq:hyperbolic-angular-taylor}
\end{align}

Substitution into \eqref{eq:hyperbolic-normalized-exact} gives
\begin{align}
  \widetilde S^{\mathbb H}_{i,d}
  &=
  \sqrt d\,\kappa_d\epsilon_d
  \coth(a_{0,d})\xi_{i,d}
  -
  Z_{i,d}
  \notag\\
  &\quad
  +
  \frac{\sqrt d}{2}
  \coth(a_{0,d})\coth(s_d)
  (\kappa_d\epsilon_d)^2\xi_{i,d}^2
  \notag\\
  &\quad
  -
  Z_{i,d}\coth(s_d)
  \kappa_d\epsilon_d\xi_{i,d}
  -
  \frac{Z_{i,d}}{2}
  (\kappa_d\epsilon_d)^2\xi_{i,d}^2
  +
  \widetilde r_{i,d},
  \label{eq:hyperbolic-expanded-score}
\end{align}
where the higher-order remainder satisfies
\[
  \max_i|\widetilde r_{i,d}|=o_p(1)
  \quad\text{if }\lambda_{d,\mathbb H}=O(1),
\]
and
\[
  \frac{\max_i|\widetilde r_{i,d}|}
       {\lambda_{d,\mathbb H}}
  \xrightarrow{p}0
  \quad\text{if }\lambda_{d,\mathbb H}\to\infty.
\]

The leading radial term in
\eqref{eq:hyperbolic-expanded-score} contains
$\coth(a_{0,d})=\coth(\kappa_dR_{0,d})$.  This has a direct geometric
interpretation: in hyperbolic space, the change in distance produced by
moving a prototype radially depends on the radial position of the
query.  Thus the query radius enters the exact first-order sensitivity.
The thin-shell condition then shows that the random query radius is
asymptotically indistinguishable from the common shell radius; it is
not replaced by $\rho_d$ as an additional modeling assumption.

Indeed,
\[
  a_{0,d}
  =
  s_d(1+\tau_d\xi_{0,d}),
  \qquad
  \tau_d
  =
  \frac{b_d}{s_d\coth(s_d)}
  \longrightarrow0.
\]
Consequently,
\begin{equation}
  \frac{\coth(a_{0,d})}{\coth(s_d)}
  \xrightarrow{p}1.
  \label{eq:hyperbolic-coth-query-replacement}
\end{equation}
For $s_d\to0$, this follows from
$\coth t\sim t^{-1}$ and
$a_{0,d}/s_d\to1$; when $s_d$ is bounded away from zero, it follows
from $a_{0,d}-s_d=u_{0,d}\to0$ and continuity of $\coth$; and when
$s_d\to\infty$, both hyperbolic cotangents converge to one.

It follows from \eqref{eq:hyperbolic-coth-query-replacement} that
\begin{align}
  \sqrt d\,\kappa_d\epsilon_d
  \coth(a_{0,d})\xi_{i,d}
  &=
  \sqrt d\,\kappa_d\epsilon_d
  \coth(s_d)\xi_{i,d}
  +
  o_p(\lambda_{d,\mathbb H})
  \notag\\
  &=
  \lambda_{d,\mathbb H}\xi_{i,d}
  +
  o_p(\lambda_{d,\mathbb H}).
  \label{eq:hyperbolic-leading-radial-term}
\end{align}
When $\lambda_{d,\mathbb H}=O(1)$, the error in
\eqref{eq:hyperbolic-leading-radial-term} is $o_p(1)$.

The explicit correction terms in
\eqref{eq:hyperbolic-expanded-score} are also negligible.  In
particular,
\[
  \coth(s_d)\kappa_d\epsilon_d\xi_{i,d}
  =
  b_d\xi_{i,d}
  =
  o_p(1),
\]
so the angular coefficient converges to one.  Moreover, the
second-order radial term, divided by $\lambda_{d,\mathbb H}$, is
\[
  \frac12
  \kappa_d\epsilon_d
  \coth(a_{0,d})\xi_{i,d}^2
  =
  O_p(b_d)
  =
  o_p(1).
\]
The remaining displayed terms are of still smaller order.

Combining these estimates yields
\[
  \widetilde S^{\mathbb H}_{i,d}
  =
  \lambda_{d,\mathbb H}\xi_{i,d}
  -
  Z_{i,d}
  +
  r^{\mathbb H}_{i,d},
\]
where
\[
  \max_i|r^{\mathbb H}_{i,d}|
  \xrightarrow{p}0
  \quad\text{if }\lambda_{d,\mathbb H}=O(1),
\]
and
\[
  \frac{\max_i|r^{\mathbb H}_{i,d}|}
       {\lambda_{d,\mathbb H}}
  \xrightarrow{p}0
  \quad\text{if }\lambda_{d,\mathbb H}\to\infty.
\]
Finally, the conditional spherical Gaussian limit for
$(Z_{1,d},Z_{2,d},Z_{3,d})$ is the same as in
  \cref{lem:euclidean-score-reduction}. Since
$\widetilde S^{\mathbb H}_{i,d}$ has the same ordering as the exact
hyperbolic distance score, the result follows.
\end{proof}

\paragraph{Remark.} The condition $b_d\to0$ has a direct geometric interpretation.  In
geodesic polar coordinates, the hyperbolic metric is
\[
  \mathrm ds^2
  =
  \mathrm dr^2
  +
  s_{\kappa_d}(r)^2\,\mathrm d\Omega^2,
  \qquad
  s_{\kappa_d}(r)
  =
  \frac{\sinh(\kappa_dr)}{\kappa_d}.
\]
The function $s_{\kappa_d}(r)$ is the tangential length scale at radius
$r$, and
\[
  \frac{\mathrm d}{\mathrm dr}
  \log s_{\kappa_d}(r)
  =
  \kappa_d\coth(\kappa_dr).
\]
Consequently,
\[
  b_d
  =
  \epsilon_d
  \left.
  \frac{\mathrm d}{\mathrm dr}
  \log s_{\kappa_d}(r)
  \right|_{r=\rho_d}
\]
measures the fractional change in the local angular scale across one
radial standard deviation.  Equivalently,
\[
  b_d
  =
  \frac{\epsilon_d}
  {\ell_{\mathrm{geom}}(\rho_d)},
  \qquad
  \ell_{\mathrm{geom}}(\rho_d)
  =
  \frac{\tanh(\kappa_d\rho_d)}{\kappa_d}.
\]
Thus $b_d\to0$ requires the radial shell to be narrow relative to the
local geometric variation length.  It reduces to
$\epsilon_d/\rho_d\to0$ near the flat regime and to
$\kappa_d\epsilon_d\to0$ deep in the hyperbolic regime.  This is a
local linearization condition, not a negligible-curvature condition:
the amplified parameter $\lambda_{d,\mathbb H}=\sqrt d\,b_d$ may still have a
nonzero or divergent limit.

\begin{theorem}[Hyperbolic shell trichotomy]
\label{thm:hyperbolic-shell-trichotomy}
Under \cref{eq:hyper-thin}, replace $\lambda_d$ in
\cref{thm:euclidean-shell-trichotomy} by $\lambda_{d,\mathbb H}$.
The three entropy limits are $\log 3$, $\Hinf(\lambda;F)$, and $0$ in
the angular-dominated, intermediate, and radial-dominated regimes,
respectively; the radial-dominated conclusion additionally requires
the limiting law $F$ to be continuous.
\end{theorem}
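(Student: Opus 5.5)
The plan is to reduce \cref{thm:hyperbolic-shell-trichotomy} to the proof of \cref{thm:euclidean-shell-trichotomy}. That proof used only two facts from \cref{lem:euclidean-score-reduction}. First, the farthest-prototype ordering coincides with the ordering of a score of the form $\lambda\xi_{i,d}-Z_{i,d}+r_{i,d}$, where conditionally on the prototypes $(Z_{1,d},Z_{2,d},Z_{3,d})\Rightarrow N(0,I_3)$ in probability. Second, the remainder is $o_p(1)$ or $o_p(\lambda)$ according to whether the parameter stays bounded or diverges. \Cref{lem:hyperbolic-score-reduction} supplies exactly these two facts under \cref{eq:hyper-thin}, with $\lambda_d$ replaced by $\lambda_{d,\mathbb H}$. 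The radial fluctuations $\xi_{i,d}$ are the same standardized variables with $\xi_d\Rightarrow\Xi\sim F$. Independence of radii and directions is also unchanged, since geodesic polar coordinates factor the isotropic law in the same way. I would therefore state once that $Q_d$ is the conditional law of the argmax of the hyperbolic scores, and that $H_3=\E h(Q_d)$ as in \cref{eq:euclidean-H3-Q}. Then each case is rerun with the new scores.

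\textbf{Case $\lambda_{d,\mathbb H}\to0$.} The scores converge conditionally to $(-Z_1,-Z_2,-Z_3)$, which are exchangeable and a.s.\ tie-free. Hence $Q_d\to(1/3,1/3,1/3)$ in probability, and bounded continuity of $h$ gives the limit $\log 3$.

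\textbf{Case $\lambda_{d,\mathbb H}\to\lambda\in(0,\infty)$.} I would combine $(\xi_{1,d},\xi_{2,d},\xi_{3,d})\Rightarrow(\Xi_1,\Xi_2,\Xi_3)$ with the conditional Gaussian limit and the vanishing remainder. This gives $Q_d\Rightarrow q^\lambda(\Xi)$, and bounded convergence yields $\Hinf(\lambda;F)$. The map $w\mapsto q^\lambda(w)$ is continuous by \cref{eq:q-integral}, which handles joint convergence via a Skorokhod or subsequence argument.

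\textbf{Case $\lambda_{d,\mathbb H}\to\infty$.} I would divide by $\lambda_{d,\mathbb H}$. The noise $Z_{i,d}/\lambda_{d,\mathbb H}$ and the remainder are $o_p(1)$. I would then repeat the gap argument verbatim: continuity of $F$ makes $\Delta_d$ bounded away from $0$ with high probability, so $\max_iQ_{di}\to1$ in probability and $h(Q_d)\to0$.

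The step that needs the most care is the conditional-versus-unconditional bookkeeping. The hyperbolic remainder involves the query radius through $\coth(a_{0,d})$, and the query radius is integrated out in $Q_d$ rather than fixed by the prototypes. Since $r^{\mathbb H}_{i,d}$ is controlled only unconditionally in probability, I would pass to conditional statements by Markov's inequality, as in the Euclidean case. Specifically, $\Pp(\max_i|r^{\mathbb H}_{i,d}|>\eta\mid V_d)\to0$ in probability because its expectation tends to zero. This justifies replacing the exact score by the limiting score inside $Q_d$, up to an event of vanishing conditional probability. The rest of the argument is identical to the proof of \cref{thm:euclidean-shell-trichotomy}.
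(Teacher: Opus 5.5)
Your proposal is correct and takes the same route as the paper, whose proof simply applies the proof of \cref{thm:euclidean-shell-trichotomy} to the hyperbolic score representation \cref{eq:hyper-reduction}. Your additions make explicit steps the paper leaves implicit, without changing the argument: continuity of $w\mapsto q^\lambda(w)$, and passing from unconditional to conditional control of the remainder via $\E\,\Pp(\cdot\mid V_d)=\Pp(\cdot)$.
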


\begin{proof}
Apply the proof of \cref{thm:euclidean-shell-trichotomy} to
\cref{eq:hyper-reduction}.
\end{proof}

The amplification factor has the expansions
\begin{align}
 A(s)&=1+\frac{s^2}{3}-\frac{s^4}{45}+O(s^6),&&s\to0,\label{eq:A-small}\\
 A(s)&=s\{1+2e^{-2s}+O(e^{-4s})\},&&s\to\infty.\label{eq:A-large}
\end{align}
Thus curvature sensitivity is quadratic near the flat limit and asymptotically linear deep in the hyperbolic regime.

\begin{proposition}[Monotonicity of geometric amplification]
\label{prop:A-monotone}
With the continuous extension $A(0)=1$, the map $A(s)=s\coth s$ is strictly increasing from $[0,\infty)$ onto $[1,\infty)$.
\end{proposition}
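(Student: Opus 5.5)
The argument has three parts: the value at the origin, strict monotonicity via the sign of $A'$, and surjectivity from continuity and growth at infinity.

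\emph{Continuity at the origin.} By the expansion \cref{eq:A-small}, $A(s)\to1$ as $s\downarrow0$. Hence the extension $A(0)=1$ makes $A$ continuous on $[0,\infty)$. The function is smooth on $(0,\infty)$.

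\emph{Strict monotonicity.} For $s>0$,
\[
  A'(s)=\coth s-\frac{s}{\sinh^2 s}=\frac{\sinh s\cosh s-s}{\sinh^2 s}=\frac{\tfrac12\sinh(2s)-s}{\sinh^2 s}.
\]
Since $\sinh t>t$ for $t>0$ (from the power series, or because $\cosh t>1$ gives $\frac{\mathrm d}{\mathrm dt}(\sinh t-t)>0$), taking $t=2s$ gives $\tfrac12\sinh(2s)>s$. Therefore $A'(s)>0$ on $(0,\infty)$. Combined with continuity at $0$, the mean value theorem shows that $A$ is strictly increasing on $[0,\infty)$.

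\emph{Surjectivity.} Since $\coth s>1$ for $s>0$, we have $A(s)\ge s$, so $A(s)\to\infty$ as $s\to\infty$; this is also consistent with \cref{eq:A-large}. A continuous, strictly increasing function with $A(0)=1$ and $A(s)\to\infty$ maps $[0,\infty)$ onto $[1,\infty)$ by the intermediate value theorem.

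The only step requiring any care is the sign of $A'$, and the double-angle rewriting reduces it to the elementary inequality $\sinh t>t$. As a byproduct, this also gives the bound $A(s)\ge1$ used after \cref{eq:hyper-thin}.
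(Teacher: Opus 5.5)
Your proof is correct and follows essentially the same route as the paper. Both arguments compute $A'(s)=(\sinh s\cosh s-s)/\sinh^2 s$ and show the numerator is positive: the paper notes it vanishes at $0$ with derivative $2\sinh^2 s>0$, while you use $\sinh s\cosh s=\tfrac12\sinh(2s)$ and $\sinh t>t$, which is the same fact, and your continuity-at-zero, $A(s)\ge s$, and intermediate-value steps spell out what the paper compresses into ``the endpoint values follow from the expansions.''
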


\begin{proof}
For $s>0$,
\begin{equation}
 A'(s)=\coth s-s\operatorname{csch}^2s
 =\frac{\sinh s\cosh s-s}{\sinh^2s}>0.
\end{equation}
The numerator is zero at $s=0$ and has derivative $2\sinh^2s>0$ for $s>0$. The endpoint values follow from the expansions above.
\end{proof}

\subsection{Dimensionless inversion and the ordinal-embedding interface}

Write
\begin{equation}
 \tau_d=\frac{\epsilon_d}{\rho_d},\qquad
 s_d=\kappa_d\rho_d,\qquad
 \chi_d=s_d^2=\kappa_d^2\rho_d^2.
 \label{eq:dimensionless-parameters}
\end{equation}
Then \cref{eq:lambda-hyper} becomes
\begin{equation}
 \lambda_{d,\mathbb H}=\sqrt d\,\tau_d A(s_d).
 \label{eq:dimensionless-inversion}
\end{equation}

\begin{proposition}[Similarity-gauge invariance]
\label{prop:gauge}
Under a change of length unit $\metric'=c\metric$ with $c>0$, let
$\rho_d'=c\rho_d$, $\epsilon_d'=c\epsilon_d$, and $\kappa_d'=\kappa_d/c$. Then
\begin{equation}
 \Htri(P,\metric')=\Htri(P,\metric),\qquad
 \tau_d'=\tau_d,\qquad s_d'=s_d,\qquad\chi_d'=\chi_d,
\end{equation}
and $\lambda_{d,\mathbb H}'=\lambda_{d,\mathbb H}$.
\end{proposition}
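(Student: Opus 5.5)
The plan is to verify each claim directly from the definitions, since the statement is essentially a bookkeeping check on how each quantity scales under $\metric'=c\metric$.

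\textbf{Step 1: the entropy.} Invariance of $\Htri$ is the special case of \cref{prop:ordinal} with the strictly increasing map $t\mapsto ct$, $c>0$. This map preserves every farthest-label ordering and every tie. Hence $q(v)$ is the same for every prototype triple $v$, and so is $\Htri(P,\metric')$.

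\textbf{Step 2: the radial and curvature parameters.} Geodesic polar coordinates with respect to $\metric'$ have radius $R_d'=cR_d$, with the same direction $U_d$. Consequently $\rho_d'=\E R_d'=c\rho_d$ and $\epsilon_d'=\operatorname{sd}(R_d')=c\epsilon_d$. The standardized fluctuation $\xi_d$ is unchanged, so the limiting law $F$ is unchanged.

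For the curvature, scaling distances by $c$ maps $\mathbb H^d_{-\kappa_d^2}$ isometrically onto $\mathbb H^d_{-\kappa_d^2/c^2}$. One can confirm this from \cref{eq:hyper-score-exact}: $\kappa_dR_{i,d}=(\kappa_d/c)(cR_{i,d})$. The law-of-cosines score is therefore identical when written with $\kappa_d'=\kappa_d/c$ and $R'=cR$. This is why $\kappa_d'=\kappa_d/c$ is the consistent assignment.

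\textbf{Step 3: the dimensionless ratios.} Direct substitution gives $\tau_d'=\epsilon_d'/\rho_d'=\tau_d$ and $s_d'=\kappa_d'\rho_d'=s_d$. Hence $\chi_d'=s_d^2=\chi_d$.

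\textbf{Step 4: the effective signal ratio.} By \cref{eq:dimensionless-inversion}, $\lambda_{d,\mathbb H}'=\sqrt d\,\tau_d'A(s_d')=\lambda_{d,\mathbb H}$. Equivalently, $b_d'=\kappa_d'\epsilon_d'\coth(s_d')=b_d$, so the thinness condition \cref{eq:hyper-thin} is also preserved.

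\textbf{Main obstacle.} There is little real difficulty. The only point needing care is Step 2: one must justify that the curvature parameter transforms as $\kappa_d/c$, not as a free choice. I would handle this by noting that the exact score \cref{eq:hyper-score-exact} depends on $(\kappa_d,R)$ only through the products $\kappa_dR$.
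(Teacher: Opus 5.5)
Your proof is correct and matches the paper's: invariance of $\Htri$ is read off from \cref{prop:ordinal} with the strictly increasing map $t\mapsto ct$, and the identities for $\tau_d'$, $s_d'$, $\chi_d'$, and $\lambda_{d,\mathbb H}'$ follow by direct substitution. Your Step~2 argument that $\kappa_d'=\kappa_d/c$ is forced by the geometry is a sound consistency check, but the statement takes these transformation rules as hypotheses, so the proof does not need it.
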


\begin{proof}
The entropy identity follows from \cref{prop:ordinal}. The remaining identities follow by direct substitution:
$\epsilon_d'/\rho_d'=\epsilon_d/\rho_d$ and
$\kappa_d'\rho_d'=(\kappa_d/c)(c\rho_d)=\kappa_d\rho_d$.
\end{proof}

Suppose $\Hinf(\lambda;F)$ is strictly decreasing on a chosen operating interval and the radial law, $d$, and $\tau_d$ are known or consistently calibrated. An entropy observation identifies
\begin{equation}
 \widehat\lambda_{d,\mathbb H}=\Hinf^{-1}(\widehat H;F),
\end{equation}
after which \cref{prop:A-monotone} gives the unique normalized solution
\begin{equation}
 \widehat s_d=A^{-1}\!\left(
 \frac{\widehat\lambda_{d,\mathbb H}}{\sqrt d\,\widehat\tau_d}
 \right),\qquad
 \widehat\chi_d=\widehat s_d^2,
 \label{eq:chi-hat}
\end{equation}
provided the argument of $A^{-1}$ lies in $[1,\infty)$. Absolute curvature can then be reported as $\widehat\kappa_d^2=\widehat\chi_d/\rho_d^2$ only when $\rho_d$ is expressed in an externally meaningful length unit.

An ordinal embedding is not needed to estimate $\Htri$, but it is one possible source of the quantities required for curvature calibration. If a particular embedding method is known to recover the relevant shell up to a similarity, any margin-induced global scale changes $\rho_d$ and $\epsilon_d$ together and leaves $\widehat\tau_d$ and $\widehat\chi_d$ unchanged. The margin fixes the numerical coordinate unit; it does not identify physical scale. Sparse comparisons, noisy labels, or model misspecification may distort an embedding by more than a similarity, so this interface requires a separate embedding-error analysis \cite{kleindessner2014,ariascastro2017,suzuki2019}.

The remaining qualifications are structural. The flat expansion \cref{eq:A-small} makes small normalized curvature weakly identified. The master curve flattens near zero entropy, making large-curvature inversion ill-conditioned. Finally, isotropy and radial-angular independence are model assumptions, not conclusions drawn from the ordinal observations.

For a non-Gaussian shell, one can standardize observed radii and sample $\Xi$ triples from their empirical distribution when evaluating \cref{eq:q-integral}. This corrects radial-law mismatch but does not correct angular anisotropy or radial-angular dependence.

\section{Experiments}

\subsection{Protocol}

All main results are regenerated by the accompanying CPU notebook. It first checks that farthest labels are unchanged by several strictly increasing transforms and that $\tau_d$, $s_d$, $\chi_d$, and $\lambda_{d,\mathbb H}$ remain invariant under several global rescalings. These checks verify implementations of exact identities and are not empirical evidence for the theorems. Unless noted otherwise, each simulation condition uses $M=220$ prototype triples, $N=700$ witnesses per triple, the adaptive correction in \cref{eq:miller}, and independent fixed seeds. The master curve uses $18{,}000$ radial triples and $35$-node Gauss--Hermite quadrature. Error bars in the figures are $\pm1.96$ Monte Carlo standard errors and do not include data-sampling uncertainty. We report continuous error summaries rather than applying a post hoc pass--fail threshold.

\subsection{Euclidean master curve}

We generated Gaussian radial shells at $d\in\{32,96,256\}$ and $\lambda\in\{0,0.5,1,2,3\}$. Positive radii were enforced by a floor at $10^{-3}$; the maximum affected fraction was $3.38\%$, attained at the lowest dimension and largest shell width. Across all 15 conditions, the RMSE from the Gaussian master curve was $0.0164$ and the maximum absolute residual was $0.0276$ (\cref{fig:euclidean}). The residual does not decrease monotonically in this modest Monte Carlo run, so the figure supports finite-dimensional approximation rather than a claimed empirical convergence rate.

\begin{figure}[t]
  \centering
  \includegraphics[width=\textwidth]{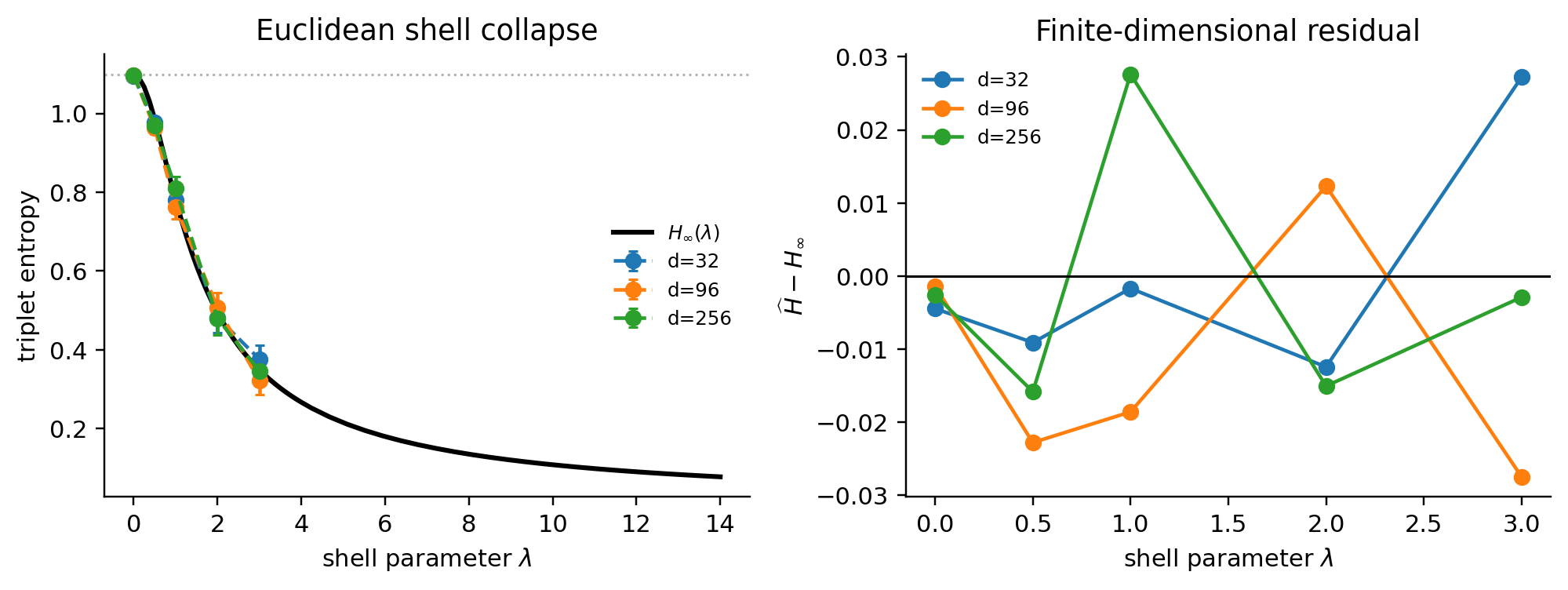}
  \caption{\textbf{Euclidean shell law.} Left: the finite-dimensional estimates track the limiting curve as a function of $\lambda$. Right: signed residuals show finite-dimensional and Monte Carlo deviations; no pointwise equality is claimed.}
  \label{fig:euclidean}
\end{figure}

\subsection{Hyperbolic amplification}

At $d=128$, $\rho=1$, and $\lambda_d=1$, we varied $\kappa$ from $0.05$ to $5$ and evaluated the exact hyperbolic law-of-cosines ordering. Plotting entropy at $\lambda_{d,\mathbb H}=A(\kappa)$ gives RMSE $0.0209$ and maximum absolute error $0.0389$ from the same Gaussian master curve (\cref{fig:hyperbolic}). The Spearman correlation between effective parameter and entropy was $-1$ in this run. The right panel illustrates the quadratic flat-regime approximation and the deep-hyperbolic linear approximation.

\begin{figure}[t]
  \centering
  \includegraphics[width=\textwidth]{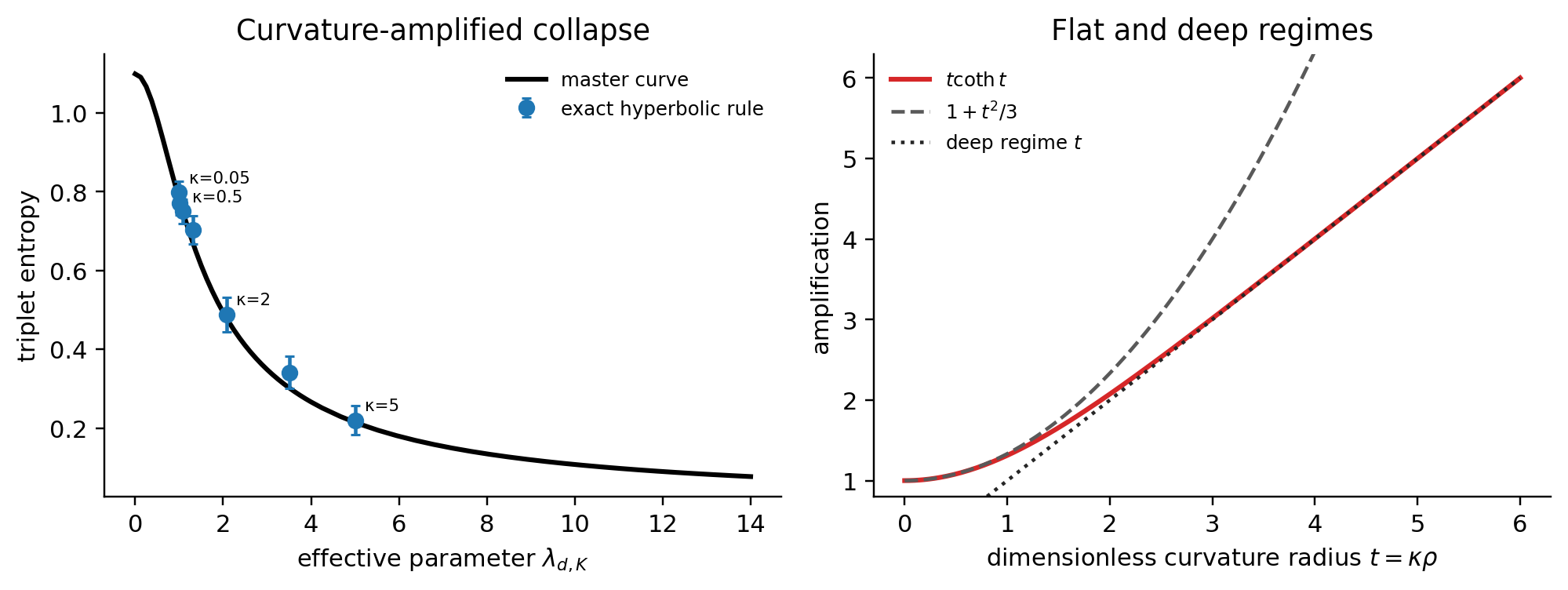}
  \caption{\textbf{Hyperbolic amplification.} Left: exact finite-dimensional hyperbolic estimates plotted at the predicted effective parameter. Right: $A(s)=s\coth s$ and its flat- and deep-regime asymptotic forms.}
  \label{fig:hyperbolic}
\end{figure}

\subsection{Controlled latent-network calibration}

We sampled $700$ synthetic latent points in $d=48$, generated sparse hyperbolic graphs with average degree approximately $10$, selected an annulus of $564$ observed latent points, calibrated its empirical radial law, and inverted entropy at four values of the curvature scale $\kappa$ \cite{krioukov2010,papadopoulos2012,lixuzhu2026}. The estimator used the latent polar coordinates; adjacency was used only to verify a nontrivial sparse-network regime. The median relative error in $\kappa$ was $6.6\%$ and the maximum was $12.5\%$ (\cref{tab:summary,fig:stress}, left). This is a controlled calibration result, not evidence for graph-only identification.

\begin{table}[t]
\centering
\caption{Compact reproduction summary. ``Worst case'' is an absolute entropy error for the master-curve experiments and a relative error in the curvature scale $\kappa$ for the inversion experiments.}
\label{tab:summary}
\begin{tabular}{lrrl}
\toprule
Experiment & Summary statistic & Worst case & Scope \\
\midrule
Euclidean shell & RMSE $0.0164$ & $0.0276$ & $15$ conditions \\
Hyperbolic amplification & RMSE $0.0209$ & $0.0389$ & $7$ curvatures \\
Latent-network calibration & median $6.6\%$ & $12.5\%$ & observed synthetic latents \\
Angular anisotropy & Spearman $1.00$ & $68.9\%$ & stress test \\
\bottomrule
\end{tabular}
\end{table}

\subsection{Angular-anisotropy stress test}

At true $\kappa=2.2$, we progressively amplified eight angular coordinates and then renormalized directions, while continuing to use the isotropic empirical-radial calibration. The angular covariance eigenvalue coefficient of variation rose from $0.286$ to $1.872$, and the relative error in $\kappa$ rose monotonically from $1.0\%$ to $68.9\%$ (Spearman $1.00$; \cref{fig:stress}, right). This stress test shows that accurate radial-law calibration does not compensate for misspecification of the angular model.

\begin{figure}[t]
  \centering
  \includegraphics[width=\textwidth]{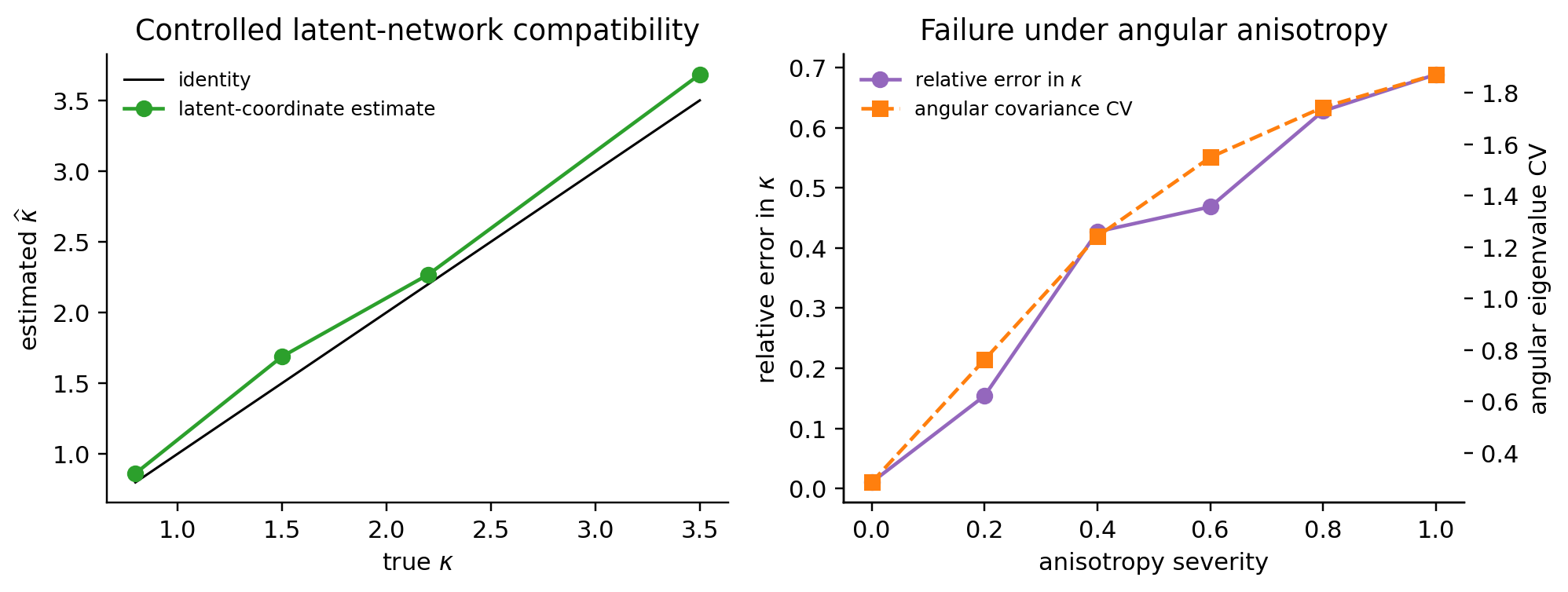}
  \caption{\textbf{Calibration scope and failure mode.} Left: inversion from observed latent coordinates in controlled synthetic hyperbolic networks. Right: isotropic calibration fails progressively as angular covariance becomes anisotropic.}
  \label{fig:stress}
\end{figure}

\section{Discussion and limitations}

Farthest-cell triplet entropy is a rank-based conditional entropy statistic. The score reductions explain why the same master curve appears in Euclidean and hyperbolic shells, and why curvature enters only through the scale-invariant pair $(\tau,\kappa\rho)$. The information identity \cref{eq:mi} clarifies that entropy defect measures how much the farthest label reveals about prototype radial fluctuations. The statistic can be computed without an embedding, but curvature calibration cannot be separated from assumptions about the shell law.

Several gaps remain before the method could support broad applied claims.
\begin{enumerate}
  \item \textbf{No general monotonicity theorem.} Strict decrease of $\Hinf$ is numerically observed for the Gaussian and empirical laws used here, but inversion for an arbitrary radial law must verify its operating interval.
  \item \textbf{No finite-dimensional rate.} The asymptotic proofs give convergence but not a Berry-Esseen-type bound for conditional cell probabilities or entropy.
  \item \textbf{Strong model assumptions.} Isotropy, radial-angular independence, and curvature-scale thinness are essential for the master curve. The stress experiment shows that fixed angular anisotropy can produce large inversion bias.
  \item \textbf{Scale gauge.} Under the calibrated shell model and a verified monotone operating interval, entropy inversion targets $\chi=\kappa^2\rho^2$. Reporting absolute curvature requires an external length unit. An ordinal-embedding margin supplies a reproducible convention, not physical-scale information.
  \item \textbf{Embedding interface.} Comparison-only estimation of $\Htri$ does not require coordinates. Using an ordinal embedding to estimate $d$, $\tau$, or a radial law introduces embedding error that is not covered by the shell theorem.
  \item \textbf{Uncertainty.} The reported standard errors measure Monte Carlo variation. Population inference from a learned finite embedding requires resampling or a data-generating analysis that accounts for embedding uncertainty.
  \item \textbf{Synthetic application only.} The present study does not establish graph-only recovery or provide a real-data curvature estimate. Inference from learned coordinates requires a separate analysis of embedding error and metric-scale calibration.
  \item \textbf{More than three prototypes.} The limiting construction and ceiling extend mechanically to any fixed $m>3$, with ceiling $\log m$. Explicit cell-probability formulas, finite-dimensional reductions, and the associated statistical tradeoffs are not analyzed here.
\end{enumerate}

A natural next step is a nonasymptotic theory under quantified angular anisotropy, together with a statistical analysis of incomplete or noisy comparison oracles. Until such a correction is established, normalized-curvature inversion should be reported as calibrated, model-based inference accompanied by diagnostic checks and anisotropy stress tests.

\paragraph{Invitation for collaboration.}
The author welcomes correspondence and potential collaboration with researchers interested in developing the open directions identified above. In particular, opportunities for substantial contributions include finite-sample concentration bounds, inference under incomplete or noisy triplet comparisons, robustness corrections for angular anisotropy, and empirical validation on real data. Interested researchers are invited to contact the author at \href{mailto:cd3411@columbia.edu}{cd3411@columbia.edu}.

\section*{Reproducibility statement}

The accompanying notebook contains the estimator, quadrature formula, all random seeds and budgets, raw tables, figure generation, and numerical consistency checks. It runs on a CPU with NumPy, SciPy, pandas, and Matplotlib. Every number in \cref{tab:summary} is generated by the notebook.

\appendix
\section{Conditional spherical Gaussian step}

For completeness, fix three prototype directions and form the $d\times3$ matrix $B_d=[U_{1,d},U_{2,d},U_{3,d}]$. If $g_d\sim N(0,I_d)$, then $U_{0,d}=g_d/\lVert g_d\rVert$ is uniform on $S^{d-1}$ and
\begin{equation}
 \sqrt d\,B_d^\top U_{0,d}
 =B_d^\top g_d\,\frac{\sqrt d}{\lVert g_d\rVert}.
\end{equation}
Conditionally on $B_d$, $B_d^\top g_d$ is Gaussian with covariance $B_d^\top B_d$. The scalar factor converges in probability to one, while the random Gram matrix converges in probability to $I_3$ because independent spherical directions have pairwise inner products tending to zero. This proves the conditional-in-probability Gaussian approximation used in both score reductions. Since the limit has a density, argmax boundaries have probability zero.

\section{Admissible scaling for the large-shell regime}

The condition $\epsilon_d/\rho_d=O(d^{-1/2})$ restricts $\lambda_d=\sqrt d\,\epsilon_d/\rho_d$ to a bounded regime. The more general assumption $\epsilon_d/\rho_d\to0$ controls the Euclidean Taylor remainders while allowing $\lambda_d$ to diverge. Hyperbolically, the analogous quantity is $b_d=\kappa_d\epsilon_d\coth(\kappa_d\rho_d)$; requiring $b_d\to0$ controls both the angular coefficient and the second-order radial term while allowing $\sqrt d\,b_d$ to diverge.

\section{Numerical inversion details}

The notebook evaluates \cref{eq:q-integral} with Gauss-Hermite quadrature and averages over sampled radial triples. To protect inversion from Monte Carlo upward steps, it replaces a computed sequence $H_1,\ldots,H_G$ on an increasing $\lambda$ grid by the cumulative minimum $\widetilde H_g=\min_{j\le g}H_j$. This is a numerical monotone projection, not evidence for a general monotonicity theorem. Entropy values outside the calibrated range are clipped to the grid endpoints and should be reported as saturation rather than precise curvature estimates. Likewise, if sampling error produces $\widehat\lambda/(\sqrt d\,\widehat\tau)<1$, the dimensionless inversion is reported at the flat boundary $\widehat\chi=0$ rather than extrapolated outside the range of $A$.


\begin{thebibliography}{99}

\bibitem{shannon1948}
C. E. Shannon.
\newblock A mathematical theory of communication.
\newblock \emph{Bell System Technical Journal},
27(3):379--423 and 27(4):623--656, 1948.
\newblock \doi{10.1002/j.1538-7305.1948.tb01338.x};
\doi{10.1002/j.1538-7305.1948.tb00917.x}.

\bibitem{miller1955}
G. A. Miller.
\newblock Note on the bias of information estimates.
\newblock In H. Quastler, editor,
\emph{Information Theory in Psychology: Problems and Methods},
volume II-B, pages 95--100. Free Press, Glencoe, IL, 1955.

\bibitem{paninski2003}
L. Paninski.
\newblock Estimation of entropy and mutual information.
\newblock \emph{Neural Computation}, 15(6):1191--1253, 2003.
\newblock \doi{10.1162/089976603321780272}.

\bibitem{aurenhammer1991}
F. Aurenhammer.
\newblock Voronoi diagrams: a survey of a fundamental geometric data structure.
\newblock \emph{ACM Computing Surveys}, 23(3):345--405, 1991.
\newblock \doi{10.1145/116873.116880}.

\bibitem{bormashenko2024}
E. Bormashenko, S. Shoval, M. Frenkel, and M. Nosonovsky.
\newblock Voronoi entropy and long-range order of 2D point sets.
\newblock arXiv:2410.21668, 2024.
\newblock \url{https://arxiv.org/abs/2410.21668}.

\bibitem{diaconisfreedman1987}
P. Diaconis and D. Freedman.
\newblock A dozen de Finetti-style results in search of a theory.
\newblock \emph{Annales de l'Institut Henri Poincar\'e,
Probabilit\'es et Statistiques}, 23(S2):397--423, 1987.
\newblock \url{https://www.numdam.org/item/AIHPB_1987__23_S2_397_0/}.

\bibitem{heiny2022}
J. Heiny, S. Johnston, and J. Prochno.
\newblock Thin-shell theory for rotationally invariant random simplices.
\newblock \emph{Electronic Journal of Probability},
27, article no.~2, 1--41, 2022.
\newblock \doi{10.1214/21-EJP734}.

\bibitem{kleindessner2014}
M. Kleindessner and U. von Luxburg.
\newblock Uniqueness of ordinal embedding.
\newblock In \emph{Proceedings of the 27th Conference on Learning Theory}, volume 35 of \emph{Proceedings of Machine Learning Research}, pages 40--67, 2014.
\newblock \url{https://proceedings.mlr.press/v35/kleindessner14.html}.

\bibitem{ariascastro2017}
E. Arias-Castro.
\newblock Some theory for ordinal embedding.
\newblock \emph{Bernoulli}, 23(3):1663--1693, 2017.
\newblock \doi{10.3150/15-BEJ792}.

\bibitem{suzuki2019}
A. Suzuki, J. Wang, F. Tian, A. Nitanda, and K. Yamanishi.
\newblock Hyperbolic ordinal embedding.
\newblock In \emph{Proceedings of the Eleventh Asian Conference on Machine Learning}, volume 101 of \emph{Proceedings of Machine Learning Research}, pages 1065--1080, 2019.
\newblock \url{https://proceedings.mlr.press/v101/suzuki19a.html}.

\bibitem{ratcliffe2019}
J. G. Ratcliffe.
\newblock \emph{Foundations of Hyperbolic Manifolds}.
\newblock Graduate Texts in Mathematics, volume 149. Springer, Cham, third edition, 2019.
\newblock \doi{10.1007/978-3-030-31597-9}.

\bibitem{krioukov2010}
D. Krioukov, F. Papadopoulos, M. Kitsak, A. Vahdat, and M. Bogu\~n\'a.
\newblock Hyperbolic geometry of complex networks.
\newblock \emph{Physical Review E}, 82:036106, 2010.
\newblock \doi{10.1103/PhysRevE.82.036106}.

\bibitem{papadopoulos2012}
F. Papadopoulos, M. Kitsak, M. A. Serrano, M. Bogu\~n\'a, and D. Krioukov.
\newblock Popularity versus similarity in growing networks.
\newblock \emph{Nature}, 489:537--540, 2012.
\newblock \doi{10.1038/nature11459}.

\bibitem{nickelkiela2017}
M. Nickel and D. Kiela.
\newblock Poincar\'e embeddings for learning hierarchical representations.
\newblock In \emph{Advances in Neural Information Processing Systems 30}, 2017.
\newblock \url{https://proceedings.neurips.cc/paper/2017/hash/59dfa2df42d9e3d41f5b02bfc32229dd-Abstract.html}.

\bibitem{gu2019}
A. Gu, F. Sala, B. Gunel, and C. R\'e.
\newblock Learning mixed-curvature representations in product spaces.
\newblock In \emph{International Conference on Learning Representations}, 2019.
\newblock \url{https://openreview.net/forum?id=HJxeWnCcF7}.

\bibitem{skopek2020}
O. Skopek, O.-E. Ganea, and G. B\'ecigneul.
\newblock Mixed-curvature variational autoencoders.
\newblock In \emph{International Conference on Learning Representations}, 2020.
\newblock \url{https://openreview.net/forum?id=S1g6xeSKDS}.

\bibitem{bachmann2020}
G. Bachmann, G. B\'ecigneul, and O.-E. Ganea.
\newblock Constant curvature graph convolutional networks.
\newblock In \emph{Proceedings of the 37th International Conference on Machine Learning}, volume 119 of \emph{Proceedings of Machine Learning Research}, pages 486--496, 2020.
\newblock \url{https://proceedings.mlr.press/v119/bachmann20a.html}.

\bibitem{lixuzhu2026}
J. Li, G. Xu, and J. Zhu.
\newblock Hyperbolic network latent space model with learnable curvature.
\newblock arXiv:2312.05319v2, 2026.
\newblock \url{https://arxiv.org/abs/2312.05319v2}.

\bibitem{hausmanwise1978}
J. A. Hausman and D. A. Wise.
\newblock A conditional probit model for qualitative choice: Discrete
decisions recognizing interdependence and heterogeneous preferences.
\newblock \emph{Econometrica}, 46(2):403--426, 1978.
\newblock \doi{10.2307/1913909}.

\end{thebibliography}
\end{document}